\journal{xxxxxxxx}
\let\oldReturn\Return
\renewcommand{\Return}{\State\oldReturn}
\definecolor{rojo}{RGB}{172,6,6}
\definecolor{negro}{RGB}{25,6,6}
\newcolumntype{M}[1]{>{\hbox to #1\bgroup\hss$}l<{$\egroup}}
\newcommand\@brcolwidth{2.9em}
\newenvironment{brmatrix}{%
    \left(%
    \hskip-\arraycolsep
    \new@ifnextchar[\@brarray{\@brarray[\@brcolwidth]}%
}{%
    \endarray
    \hskip -\arraycolsep
    \right)%
}
\def\@brarray[#1]{\array{r*\c@MaxMatrixCols {M{#1}}}}
\DeclareMathOperator{\Maxx}{Max}
\DeclareMathOperator{\Minn}{Min}
\DeclareMathOperator{\supp}{supp}
\DeclareMathOperator{\TFN}{TFN}
\DeclareMathOperator{\OT}{OT}
\providecommand{\leqot}[0]{\leq_{\OT}}
\providecommand{\geqot}[0]{\geq_{\OT}}
\newcommand{\ITEM}[1]{\noindent{#1}}
\providecommand{\Vector}[1]{\overrightarrow{#1}}
\providecommand{\NFT}[1]{(a_{#1},b_{#1},c_{#1})}
\providecommand{\crispy}[1]{\widetilde{#1}}
\providecommand{\cortea}[1]{\nicefrac{\Large{#1}}{\alpha}}
\providecommand{\operador}[3]{#1:#2\longrightarrow #3}
\providecommand{\con}[1]{\mathds{#1}}
\providecommand{\Sup}[1]{\underset{#1}{\sup~}}
\providecommand{\NFR}[0]{\mathcal{F}(\mathds{R})}
\providecommand{\NFRN}[1]{\mathcal{F}_{-}(\mathds{R})}
\providecommand{\TFNR}{\mathfrak{T}(\con{R})}
\providecommand{\suma}[2]{\overset{#1}{\underset{#2}{\sum}}}
\newcommand{\prt}[1]{\langle #1\rangle}
\newtheorem{teorema}{Theorem}[section]
\newtheorem*{tricotomia}{$OT$-Trichotomy law for $\TFNR$}
\newtheorem{proposicion}{Proposition}[section]
\newtheorem{corolario}{Corollary}[section]
\newtheorem{definicion}{Definition}[section]
\newtheorem{observacion}{Remark}[section]
\newtheorem{ejemplo}{Example}[section]
\begin{document}

\begin{frontmatter}

\title{Averaging functions on triangular fuzzy numbers}

\author{Roberto D\'iaz$^a$, Aldryn Aparcana$^{b}$, Diego Soto$^c$, Nicolás Zumelzu$^c$, Jos\'e Canum\'an$^{d}$, Alvaro Mella$^{c}$, Edmundo Mansilla$^{c}$ and Benjam\'in Bedregal$^e$}

\cortext[]{Corresponding author: nicolas.zumelzu@umag.cl }

\ead{roberto.diaz@ulagos.cl, aldryn.aparcana@unica.edu.pe,disoto@umag.cl, nicolas.zumelzu@umag.cl, jose.canuman@umag.cl, alvaro.mella@umag.cl, edmundo.mansilla@umag.cl and bedregal@dimap.ufrn.br}

\address[a]{Departamento de Ciencias Exactas, Universidad de Los Lagos, Osorno, Chile}

\address[b]{Facultad de Ciencias, Universidad Nacional San Luis Gonzaga, Ica, Peru}

\address[c]{Departamento de Matem\'atica y F\'isica, Universidad de Magallanes, Punta Arenas, Chile}

\address[d]{Departamento de Ingenier\'ia en Computaci\'on, Universidad de Magallanes, Punta Arenas, Chile}
\address[e]{Departamento de Inform\'{a}tica e Matem\'{a}tica Aplicada, Universidade Federal do Rio Grande do Norte, Natal, Brazil}

\begin{abstract} Admissible orders on fuzzy numbers are total orders which refine a basic and well-known partial order on fuzzy numbers. In this work, we define an admissible order on triangular fuzzy numbers (i.e. $\TFN$'s) and study some fundamental properties with its arithmetic and their relation with this admissible order. In addition, we also introduce the concepts of average function on $\TFN$'s and studies a generalized structure of the vector spaces.  In particular we consider the case of $\TFN$'s.

\begin{keyword}
Triangular fuzzy numbers \sep Orders on fuzzy numbers \sep admissible orders
 \sep vector space.
\end{keyword}

\end{abstract}

\end{frontmatter}

\section{Introduction}

Zadeh in \cite{zadeh1965fuzzy} proposed the concept of fuzzy sets to formalize and model the ambiguities and inaccuracies of language such as temperature, age, and speed. A class of fuzzy sets, called of fuzzy numbers, were introduced in \cite{zadeh1965fuzzy} to model a quantity that is imprecise, rather than exact, as it is in all traditional mathematics. This structure has been studied in countless works, and has been dedicated to: arithmetic operations  and other related notions for fuzzy numbers and their properties (see \cite{dubois1978h,hanss2005applied,klir1995}). Additionally, there are several proposal of partial orders (totals and non totals) for fuzzy numbers or a subclass of the fuzzy numbers as for example in \cite{TAsmus2017,kosinski2017,Roldan-Lopez-de-Hierro18,wang2014total}. More recently, Zumelzu et al. in \cite{zumelzu2022}, in the light of the notion of admissible orders in the context of extensions of fuzzy set theory as in \cite{bustince2013generation,Annax21,Laura16b,matzenauer2021strategies},  proposed the notion of admissible orders for fuzzy numbers as total orders which refines the order of Klir and Yuan in \cite{klir1995}.

To talk about vector spaces, we have to highlight the hard work of mathematicians who allowed the definition and axiomatization of this concept. Among them, we can find \cite{hamilton1853wr}, that by his definition of the complex numbers and the operations of the quaternions, one of the main elements of the vector space could be defined i.e. the geometric vectors, where later on \cite{cayley1858ii}. As is known, the theory of matrices was postulated from systems of linear equations, the definition of the matrix and its properties, within these definitions, implicitly, provides an example of vector space. In \cite{grassmann1844lineale} different crucial concepts are developed in vector spaces, such as linear dependence, linear independence, among others, also defining the concept of vector in terms of vector spaces without formalizing the latter, so that in \cite{peano1887integrazione} the formal definition of vector space is presented in an axiomatic way.

There are several extensions of vector spaces in fuzzy logic called fuzzy vector spaces \cite{colloc2020fvsoomm,colloc2017epice,katsaras1977fuzzy}. Regarding its applications we can find, for example, application to cognitive dissonance and decision making, Time fuzzy vector space (i.e. TFVS) modeling of emotion in the purchase decision, application to gaming addiction, modeling of time in medical decisions \cite{colloc2020fvsoomm,colloc2017epice}. Regarding some generalizations of ordered vector spaces \cite{cristescu1976ordered} we can find that of ordered semi-vector spaces over a weak semi-field \cite{milfont2021aggregation}, $n$-dimensional fuzzy sets over a non-empty set, among others \cite{milfont2021aggregation, costa2015generalized,vasantha1993semivector, kandasamy2002smarandache,rodriguez2016position}.

Namely, the aggregation functions play an important role in several areas, including fuzzy logic, decision-making, expert systems, risk analysis, and image processing.  The aggregation functions combine input values into a single output value, which represents all the inputs. Typical examples are weighted means, medians, OWA functions, $t$-norms and $t$-conorms. But there are many other aggregation functions and infinitely many members in most families. In general, there are several contributions to the study of aggregation functions, namely { \cite{beliakov2016practical, dimuro2017new,   yang2020some}, which, we can find them in multi-criteria decision making problems \cite{beliakov2016practical, da2022towards}, connectives in fuzzy logic \cite{beliakov2016practical,Reiser13},  image processing \cite{de2021affine, dimuro2020state,  galar2013aggregation, gonzalez2014use}, in IoT \cite{OliveiraADYRB22}, classifier ensemble \cite{Batista22,Costa18}, forest fire detection \cite{da2022towards}, power quality diagnosis \cite{nolasco2019wavelet}, motor-imagery-based brain-computer interface \cite{fumanal2021motor}, in medicine to estimate the risks of a person to develop a disease \cite{saleh2018learning}, the increasing interest in the study of this topic by defining new classes of aggregation functions \cite{de2021affine}. Among some of the classes to be defined we can find, for example, the pre-aggregation, variants of the Choquet integral and OWA functions \cite{dimuro2020state,BustinceMFGPADB21,LuccaSDBB20,LuccaSDBMKB16,zhu2019nested}.}

In this work, we extend the theory of functions of average type. The space studied here is that given by the cartesian product of fuzzy triangular numbers denoted by  $\TFNR^n$, considering as a set of scalars the field of crisp fuzzy numbers denoted by $\crispy{\con{R}}$. We prove that this set fulfills properties such as associativity, commutativity, additive neutral both for the addition of elements of $\TFNR^n$ as by the product of scalars with elements of $\TFNR^n$. We introduce the concept of increasing average type function proving properties and showing some examples. To study these increasing functions we have defined a linear order on the fuzzy triangular numbers by studying arithmetic properties related to the order. 

This paper is organized as follows: In Section \ref{preliminares}, in addition to establishing the notation used, we recall some essential notions for the remaining sections. In Section \ref{totalorder} study of arithmetic properties and order. We introduce the ordered vector spaces on $\TFN$ in Section \ref{TriangularfuzzySpace}. Section \ref{Averaging-Functions}, we define aggregation functions of the average type on $\TFN$. In addition, we give examples and a characterization of it. 

\section{Preliminaries}\label{preliminares}

In this section, we provide the fuzzy number context and the main {concepts and results which are necessaries in  the remain of this paper}. Let $\con{R}^3 = \con{R}\times\con{R}\times\con{R}$, with $\con{R}$ the set of real numbers, $\con{Q}$ and $\con{I}$ denote the set of rational and irrational numbers, respectively. The notation $<$, $>$, $\leq$ or $\geq$ stands for usual order of $\con{R}$, and $[a,b]$, $]a,b]$, $[a,b[$ and $]a,b[$ the intervals of $\con{R}$ closed, right-closed, right-open,  and open respectively (see \cite{apostol2007calculus,
rudin1964principles}). 

{
\begin{definicion} \cite{klir1995}
 A fuzzy set $A$ over a reference non empty set $X$ is a function $A:X\rightarrow [0,1]$.
\end{definicion}
}

\begin{definicion}\cite{klir1995} \label{def-FN} A fuzzy set $A$ over $\con{R}$ is called a fuzzy number if it satisfies the following conditions
\begin{enumerate}
\item[(i)] $A$ is normal, i.e., $\Sup{x\in\con{R}} A(x)=1$;
\item[(ii)] $\cortea{A}$ is a closed interval for every $\alpha\in ~]0, 1]$;
\item[(iii)] $\supp A$, i.e., the support of $A$, is bounded,
\end{enumerate}
where $\nicefrac{A}{\alpha}$ is the $\alpha$-level (or $\alpha$-cut) of $A$.
\end{definicion}

A fuzzy number $A$ is a crisp fuzzy number or in short crisp if there exists $r\in\mathds{R}$ such that $A(r)=1$ and $A(x)=0$ for each $x\neq r$. In this case we will denote $A$ by $\crispy{r}$. Finally, $\NFR$ will denote the set of all fuzzy numbers and $\crispy{\con{R}}$, $\crispy{\con{Q}}$ and $\crispy{\con{I}}$ will denote the set for all crisp fuzzy number{s} with $r\in\con{R}$, $r\in\con{Q}$ and  $r\in\con{I}${, respectively}. \vspace{5mm}

\begin{proposicion}\cite{dubois1978h} Let $A,B\in \mathcal{F}(\con{R})$ then the fuzzy sets $A+B$, $A-B$, $A\cdot B$ and $A\div B$ defined {for each $x\in \con{R}$} by
\begin{enumerate}
\item $(A+B)(x)=\Sup{x=y+z} \min\{A(y),B(z)\}$,
\item $(A-B)(x)=\Sup{x=y-z} \min\{A(y),B(z)\}$,
\item $(A\cdot B)(x)=\Sup{x=yz} \min\{A(y),B(z)\}$,
\item $(A\div B)(x)=\Sup{x=\frac{y}{z}} \min\{A(y),B(z)\},$ since $0\not\in\supp B$,
\end{enumerate}
 is a fuzzy number.
\end{proposicion}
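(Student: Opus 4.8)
The plan is to verify the three conditions of Definition~\ref{def-FN} for $C := A\star B$, where $\star$ ranges over $+,-,\cdot,\div$ and $C$ is the fuzzy set given by the corresponding $\sup$-$\min$ formula. Throughout I will use that condition (ii) is equivalent to saying that a fuzzy number is upper semicontinuous and has convex (hence interval) level sets, and that (iii) says its support is bounded. In the division case I will also use that $0\notin\supp B$, together with $\supp B$ bounded (and closed, as is standard), makes $\supp B$ a compact subset of $\con{R}\setminus\{0\}$, hence bounded away from $0$; in particular $0\notin\nicefrac{B}{\alpha}$ for every $\alpha\in ]0,1]$, so the interval operations on the level sets are well defined.

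Normality (i) is immediate: choosing $y_0,z_0$ with $A(y_0)=B(z_0)=1$, the $\sup$-$\min$ formula gives $C(y_0\star z_0)\geq\min\{A(y_0),B(z_0)\}=1$, and $z_0\neq 0$ in the division case. The core of the argument is the \emph{level-set identity}
\[
\nicefrac{C}{\alpha}=\nicefrac{A}{\alpha}\,\widehat{\star}\,\nicefrac{B}{\alpha}\qquad\text{for all }\alpha\in ]0,1],
\]
where $\widehat{\star}$ denotes ordinary interval arithmetic on compact intervals (for $\widehat{\div}$ using $0\notin\nicefrac{B}{\alpha}$). The inclusion $\supseteq$ is a direct unwinding of the $\sup$-$\min$ formula, using that $\nicefrac{A}{\alpha}\,\widehat{\star}\,\nicefrac{B}{\alpha}=\{y\star z : y\in\nicefrac{A}{\alpha},\ z\in\nicefrac{B}{\alpha}\}$.

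For the inclusion $\subseteq$ — the one genuinely non-formal step — suppose $C(x)\geq\alpha$, i.e. $\sup\{\min\{A(y),B(z)\} : y\star z=x\}\geq\alpha$; I must exhibit a concrete pair $(y,z)$ with $y\star z=x$ and $A(y)\geq\alpha$, $B(z)\geq\alpha$. Parametrising the solution set $\{(y,z):y\star z=x\}$ by a single real variable (e.g. $z=x-y$ for $+$, $z=x/y$ for $\cdot$ when $x\neq0$, $y=xz$ for $\div$), the function sending that parameter to $\min\{A(y),B(z)\}$ is upper semicontinuous, being a minimum of the upper semicontinuous maps $A,B$ precomposed with continuous parametrisations; moreover it vanishes outside a compact set, since $\{y:A(y)>0\}$ is bounded and, in the multiplicative and division cases, $\{z:B(z)>0\}\subseteq\supp B$ is bounded away from $0$. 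An upper semicontinuous function vanishing off a compact set attains its supremum, and since that supremum is $\geq\alpha>0$ any maximiser yields the required pair (the degenerate case $\star=\cdot$, $x=0$ is checked separately and trivially). Granted the identity, condition (ii) for $C$ holds because interval arithmetic sends compact intervals to compact intervals, and condition (iii) holds because $\{x:C(x)>0\}=\{y:A(y)>0\}\star\{z:B(z)>0\}$ is bounded for the same reasons as above.

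The only real obstacle is the attainment of the supremum in the $\subseteq$ half of the level-set identity; the rest is bookkeeping with interval arithmetic. A tidier but less self-contained route would be to invoke the extension principle for a continuous binary operation applied to fuzzy numbers with compact support, specialised to $+,-,\cdot$ (continuous on $\con{R}^2$) and to $\div$ (continuous on $\con{R}\times(\con{R}\setminus\{0\})$, which is precisely where the hypothesis $0\notin\supp B$ enters); I would keep the elementary upper-semicontinuity/compactness argument as the fallback.
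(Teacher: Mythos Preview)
The paper does not supply its own proof of this proposition: it is quoted as a preliminary result from \cite{dubois1978h} and stated without argument, so there is no ``paper's proof'' to compare against. Your outline is the standard route to this classical fact --- reduce to the level-set identity $\nicefrac{C}{\alpha}=\nicefrac{A}{\alpha}\,\widehat{\star}\,\nicefrac{B}{\alpha}$ via attainment of the $\sup$ by an upper-semicontinuity/compactness argument, then read off (i)--(iii) from interval arithmetic --- and it is essentially correct.

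Two small points worth tightening. First, in the multiplicative case your justification that the parametrised function vanishes off a compact set invokes ``$\{z:B(z)>0\}\subseteq\supp B$ is bounded away from $0$'', but that hypothesis is only available for division; for $A\cdot B$ with $x\neq 0$ the correct reason is that $\supp B$ is \emph{bounded}, so $|x/y|$ exceeding that bound forces $B(x/y)=0$, which in turn keeps $y$ bounded away from $0$. Second, for (iii) you write $\{x:C(x)>0\}=\{y:A(y)>0\}\star\{z:B(z)>0\}$, but only the inclusion $\subseteq$ is immediate (and is all you need), since strict positivity of the supremum already yields a witnessing pair without any attainment argument. Neither issue affects the validity of the overall strategy.
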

\vspace{5mm}

\begin{definicion}\cite{klir1995} \label{deftriangularnumero}
A fuzzy number $A$, is called a triangular fuzzy number, in short $\TFN$, if there is $(a,b,c)\in \mathds{R}^3$ such that $a\leq b\leq c$ and
\begin{equation*}
A(x)=\left\{\begin{array}{ll}
1, & \textrm{ if }x=b,\\
\frac{x-a}{b-a}, & \textrm{ if } x \in~ ]a,b[,\\
\frac{c-x}{c-b}, & \textrm{ if }x \in~]b,c[,\\
0,& \textrm{ other cases. }
\end{array}\right.
\end{equation*}
\end{definicion}
In this work, we represent the fuzzy triangular numbers by the triple $(a,b,c)$ and denote the set of all triangular fuzzy numbers by  $\TFNR$, namely $$\TFNR=\{A\in\NFR:A\textrm{ is a triangular fuzzy numbers}\}.$$ 
{Observe that each triple $(a,b,c)\in \mathds{R}^3$ satisfying $a\leq b\leq c$ denotes a $\TFN$.}
\begin{proposicion}\cite{dubois1978h}\label{pro-oper}
If $A=(a_1,b_1,c_1)$ and $B=(a_2,b_2,c_2)$ be are elements in $\TFNR$ and $\crispy{r}\in\crispy{\con{R}}$.  Then
\begin{enumerate}
\item $A+B=(a_1+a_2,b_1+b_2,c_1+c_2)$,
\item $A-B=(a_1-c_2,b_1-b_2,c_1-a_2)$,
\item $\crispy{r}A=\begin{cases}(ra_1,rb_1,rc_1)\textrm{ , if }r\geq 0,\\
(rc_1,rb_1,ra_1) \textrm{ , if }r<0.\end{cases}$
\end{enumerate}
i.e. the set {$\TFNR$ is} closed under addition, substraction and scalar product. Note that $(-1)A=-A=-(a_1,a_2,a_3)=(-a_3,-a_2,-a_1)$.
\end{proposicion}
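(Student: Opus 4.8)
The plan is to pass to $\alpha$-cuts. First I would record that, by Definition \ref{deftriangularnumero}, a triangular fuzzy number $A=(a,b,c)$ has, for each $\alpha\in\,]0,1]$,
$$\nicefrac{A}{\alpha}=\bigl[\,a+\alpha(b-a),\ c-\alpha(c-b)\,\bigr],$$
and in particular $\crispy{r}$ has $\nicefrac{\crispy{r}}{\alpha}=\{r\}=[r,r]$ for every $\alpha$. I would also use that two fuzzy numbers are equal iff they have the same $\alpha$-cut for each $\alpha\in\,]0,1]$ (since $A(x)\geq\alpha\iff x\in\nicefrac{A}{\alpha}$), so it suffices to compare $\alpha$-cuts.

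The key tool is the well-known fact (see \cite{dubois1978h}, and also derivable from the preceding Proposition and the $\sup$--$\min$ formulas, using that each $\alpha$-cut is compact so the suprema are attained) that Zadeh's extended operations commute with $\alpha$-cutting: for $A,B\in\NFR$ and $\alpha\in\,]0,1]$,
$$\nicefrac{(A+B)}{\alpha}=\nicefrac{A}{\alpha}+\nicefrac{B}{\alpha},\qquad \nicefrac{(A-B)}{\alpha}=\nicefrac{A}{\alpha}-\nicefrac{B}{\alpha},\qquad \nicefrac{(\crispy{r}\cdot A)}{\alpha}=\{r\}\cdot\nicefrac{A}{\alpha},$$
where the right-hand sides use interval arithmetic: $[p,q]+[s,t]=[p+s,q+t]$, $[p,q]-[s,t]=[p-t,q-s]$, and $\{r\}\cdot[s,t]=[rs,rt]$ if $r\geq0$ while $\{r\}\cdot[s,t]=[rt,rs]$ if $r<0$. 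If a self-contained argument is preferred, item (1) can instead be obtained directly from $(A+B)(x)=\Sup{y+z=x}\min\{A(y),B(z)\}$ by checking the three pieces of the membership function; the $\alpha$-cut route is cleaner mainly because it handles the degenerate cases $a=b$ and $b=c$ uniformly.

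With this in hand each item is a short computation. For (1), substituting the triangular $\alpha$-cut formulas into $\nicefrac{A}{\alpha}+\nicefrac{B}{\alpha}$ gives
$$\bigl[\,(a_1+a_2)+\alpha\bigl((b_1+b_2)-(a_1+a_2)\bigr),\ (c_1+c_2)-\alpha\bigl((c_1+c_2)-(b_1+b_2)\bigr)\,\bigr],$$
which is precisely $\nicefrac{C}{\alpha}$ for $C=(a_1+a_2,b_1+b_2,c_1+c_2)$, and $a_1+a_2\leq b_1+b_2\leq c_1+c_2$ shows $C\in\TFNR$. For (2) the same substitution yields left endpoint $(a_1-c_2)+\alpha\bigl((b_1-b_2)-(a_1-c_2)\bigr)$ and right endpoint $(c_1-a_2)-\alpha\bigl((c_1-a_2)-(b_1-b_2)\bigr)$, i.e.\ the $\alpha$-cut of $(a_1-c_2,b_1-b_2,c_1-a_2)$, and from $a_1\leq b_1$, $a_2\leq b_2\leq c_2$, $b_1\leq c_1$ one gets $a_1-c_2\leq b_1-b_2\leq c_1-a_2$. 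For (3), using $\nicefrac{\crispy{r}}{\alpha}=\{r\}$ and splitting on the sign of $r$ gives $\bigl[ra_1+\alpha(rb_1-ra_1),\,rc_1-\alpha(rc_1-rb_1)\bigr]$ when $r\geq0$ (the $\alpha$-cut of $(ra_1,rb_1,rc_1)$, with $ra_1\leq rb_1\leq rc_1$) and $\bigl[rc_1+\alpha(rb_1-rc_1),\,ra_1-\alpha(ra_1-rb_1)\bigr]$ when $r<0$ (the $\alpha$-cut of $(rc_1,rb_1,ra_1)$, with $rc_1\leq rb_1\leq ra_1$); taking $r=-1$ in the second case gives $-A=(-c_1,-b_1,-a_1)$. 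In every case the output triple is again nondecreasing, which is exactly closure of $\TFNR$ under addition, subtraction and scalar product. The only genuine obstacle is the justification that the extended operations commute with $\alpha$-cutting — equivalently, that the suprema in the $\sup$--$\min$ definitions are attained on the relevant level sets; once that is granted everything reduces to routine interval algebra.
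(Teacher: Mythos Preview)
Your argument via $\alpha$-cuts is correct and is the standard way to establish these formulas; the computations and the verification that each resulting triple is nondecreasing are all fine, and your caveat about the $\sup$--$\min$ suprema being attained (i.e., the commutation of Zadeh's extension with $\alpha$-cutting) is exactly the right place to flag the only nontrivial ingredient.

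As for comparison with the paper: there is nothing to compare. The paper does not prove this proposition at all --- it simply cites it from \cite{dubois1978h} and moves on. So your write-up is not an alternative to the paper's proof but rather a supplement, supplying what the paper takes as known from the literature. If anything, your approach is precisely the one Dubois and Prade use, so it is consonant with the cited source.
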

\begin{corolario}
Let $A$ be a fuzzy {number}. Then $\underset{n-\textrm{times}}{\underbrace{A+A+...+A}}=\crispy{n}A$.
\end{corolario}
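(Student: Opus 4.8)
The plan is to argue by induction on $n$, using the explicit componentwise formulas for addition and scalar multiplication of triangular fuzzy numbers recorded in Proposition~\ref{pro-oper}. Note first that for the right-hand side $\crispy{n}A$ to make sense via that proposition, $A$ must be read here as a triangular fuzzy number, say $A=(a_1,b_1,c_1)$ with $a_1\leq b_1\leq c_1$; so implicitly the statement is about $\TFNR$, not about arbitrary fuzzy numbers.

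For the base case $n=1$: since $1\geq 0$, part~3 of Proposition~\ref{pro-oper} gives $\crispy{1}A=(1\cdot a_1,1\cdot b_1,1\cdot c_1)=(a_1,b_1,c_1)=A$, which is precisely the one-term ``sum''. For the inductive step, assume $\underbrace{A+\cdots+A}_{n\text{-times}}=\crispy{n}A$. Because $n\geq 1>0$, part~3 of Proposition~\ref{pro-oper} applies in its ``$r\geq 0$'' branch and yields $\crispy{n}A=(na_1,nb_1,nc_1)$; then part~1 of the same proposition gives
\[
\underbrace{A+\cdots+A}_{(n+1)\text{-times}}=\Bigl(\underbrace{A+\cdots+A}_{n\text{-times}}\Bigr)+A=\crispy{n}A+A=\bigl(na_1+a_1,\;nb_1+b_1,\;nc_1+c_1\bigr)=\bigl((n+1)a_1,(n+1)b_1,(n+1)c_1\bigr),
\]
and since $n+1\geq 0$ this last triple equals $\crispy{n+1}A$ by part~3 again. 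This closes the induction. (To regroup the $(n+1)$-fold sum as $\bigl(\text{$n$-fold sum}\bigr)+A$ we use associativity of $+$ on $\TFNR$, which is immediate from the componentwise description in part~1.)

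There is essentially no genuine obstacle in this argument; it is a routine induction. The only points deserving a moment's care are the sign bookkeeping in the scalar-multiplication rule — one must observe that $n$ and $n+1$ are positive, so the ``$r\geq 0$'' case of part~3 is the relevant one — and the remark that the corollary is correctly stated for triangular fuzzy numbers so that $\crispy{n}A$ is well defined.
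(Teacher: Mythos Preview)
Your argument is correct and is precisely the intended route: the paper states this corollary without proof, directly after Proposition~\ref{pro-oper}, signalling that it is meant as a straightforward consequence of the componentwise addition and scalar-product formulas there; your induction simply spells this out. Your side remark that $A$ must be read as a triangular fuzzy number is also apt, since Proposition~\ref{pro-oper} is stated only for $\TFNR$.
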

\begin{observacion}
The product and division, i.e.,  $A\cdot B$ and $A\div B$,  of triangular fuzzy numbers only are $\TFN$'s if and only if $A$ or $B$ is {a crisp} fuzzy number (obviously $0\not\in\supp{B}$ in the case of division).
\end{observacion}

 \begin{observacion}\label{rem-ult-sec2}
The addition is commutative and associative in fuzzy triangular numbers. Also, addition distributes over the product and vice versa, where the product is given in Proposition \ref{pro-oper}. Note also that the crisp fuzzy numbers with the operations of addition and multiplication are isomorphic to the field of real numbers and are therefore also a field.
\end{observacion}

\begin{definicion}\label{definicionkytfn} Let $A=(a_1,b_1,c_1)$ and $B=(a_2,b_2,c_2)$ be two $\TFN$'s. We define the relation $\leq_N$ by
$$A\leq_N B \textrm{ if and only if } a_1\leq a_2\textrm{ and }b_1\leq b_2 \textrm{ and } c_1\leq c_2.$$
We denote $A<_N B$ when $A\neq B$, i.e. $a_1\neq a_2$ or $b_1\neq b_2$ or $c_1\neq c_2$.
\end{definicion}

The Definition \ref{definicionkytfn} generalizes the usual order of real numbers, in the sense that both agree when to restrict to crisp fuzzy numbers. It is worth noting that this relation is the restriction to the $\TFN$'s of the Klir and Yuan partial order on the fuzzy numbers. In the following, we adapt the definition of admissible orders on fuzzy numbers introduced in \cite{zumelzu2022} to the context of $\TFN$'s.

\begin{definicion}\label{def-ordenadmisible}
Let $\leq_N$ and $\preceq$ be two orders on the $\TFN$'s. The order $\preceq$ is called an admissible order w.r.t. {$\prt{{\TFNR},\leq_N}$}, if
\begin{enumerate}[labelindent=\parindent, leftmargin=*,label=]
\item[(i)] $\preceq$ is a linear order on ${\TFNR}$;
\item[(ii)] for all $A$, $B$ in ${\TFNR}$, $A\preceq B$ whenever $A\leq_N B$.
\end{enumerate}
\end{definicion}

\section{Admissible order}\label{totalorder}
In this section, we focus on proving {an admissible order on $\TFNR$,} we also study properties that such linear {order} verifies relates to the addition and subtraction operations.
 \vspace{5mm}
 
\begin{definicion}\label{defOT}
  Let $(a_1,b_1,c_1)$ and $(a_2,b_2,c_2)$ be two $\TFN$'s. We define the relation $\leqot$ by
\begin{multline*}
(a_1,b_1,c_1)\leqot (a_2,b_2,c_2) \textrm{ if and only if }  \begin{cases} b_1<b_2,
\textrm{ or }\\
 b_1=b_2\textrm{ and }c_1<c_2, \textrm{ or }\\
 b_1=b_2\textrm{ and }c_1=c_2\textrm{ and }a_1\leq a_2.\end{cases}
\end{multline*}
We denote $A<_{\OT} B$ when {$A\leqot B$ and $A\neq B$, i.e.} $a_1\neq a_2$ or $b_1\neq b_2$ or $c_1\neq c_2$.
\end{definicion}

\begin{proposicion}\label{OTordentotal}
The relation $\leqot$ is an admissible order on  $\TFNR$.
\end{proposicion}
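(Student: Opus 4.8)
The plan is to recognize that $\leqot$ is nothing other than the lexicographic order on the reordered triples $(b,c,a)\in\con{R}^3$, transported to $\TFNR$ through the bijection $(a,b,c)\mapsto(b,c,a)$. From this viewpoint the statement splits into two independent tasks: first, that $\leqot$ is a linear order on $\TFNR$ (condition (i) of Definition~\ref{def-ordenadmisible}), which follows from the general fact that a finite lexicographic product of linear orders is again a linear order; second, that $\leqot$ refines $\leq_N$ (condition (ii)). I would nonetheless spell out the order axioms directly from the three clauses of Definition~\ref{defOT}, since the verification is short.

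For the order axioms, let $A=(a_1,b_1,c_1)$ and $B=(a_2,b_2,c_2)$. Reflexivity is immediate from the third clause, because $a_1\leq a_1$. For antisymmetry, suppose $A\leqot B$ and $B\leqot A$: the clause $b_1<b_2$ would force, via $B\leqot A$, one of $b_2<b_1$ or $b_2=b_1$, a contradiction; hence $b_1=b_2$, and the same reasoning in the $c$-coordinate gives $c_1=c_2$, after which the third clause applied both ways yields $a_1\leq a_2$ and $a_2\leq a_1$, so $A=B$. For transitivity, assume $A\leqot B\leqot C=(a_3,b_3,c_3)$ and argue by the ``dominant coordinate'': if $b_1<b_2$ or $b_2<b_3$ then $b_1<b_3$ and the first clause gives $A\leqot C$; if $b_1=b_2=b_3$, reduce to the analogous dichotomy in the $c$-coordinate, and if moreover $c_1=c_2=c_3$, conclude from transitivity of $\leq$ on $\con{R}$ applied to $a_1\leq a_2\leq a_3$. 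Totality follows by comparing $b_1$ with $b_2$ (they are $\leq$-comparable in $\con{R}$): if they differ we get comparability from the first clause; if $b_1=b_2$ compare $c_1,c_2$ and then, if also equal, $a_1,a_2$, always using totality of $\leq$ on $\con{R}$. Thus $\leqot$ is a linear order on $\TFNR$.

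It remains to check admissibility, i.e. $A\leq_N B\Rightarrow A\leqot B$. Assume $a_1\leq a_2$, $b_1\leq b_2$ and $c_1\leq c_2$. If $b_1<b_2$, the first clause of Definition~\ref{defOT} applies. Otherwise $b_1=b_2$; if then $c_1<c_2$ the second clause applies, and if $c_1=c_2$ then $a_1\leq a_2$ makes the third clause apply. In every case $A\leqot B$, so condition (ii) of Definition~\ref{def-ordenadmisible} holds, and combined with linearity this shows $\leqot$ is an admissible order on $\TFNR$.

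The only step requiring genuine care is transitivity, because of the interleaving of strict and non-strict clauses; phrasing it as a lexicographic comparison (equivalently, a short case split that collapses the $3\times3\times3$ possibilities by reducing to the next coordinate whenever the current one is tied) keeps the bookkeeping under control. I do not anticipate a real obstacle beyond this.
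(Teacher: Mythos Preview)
Your proof is correct and follows essentially the same approach as the paper: direct verification of reflexivity, antisymmetry, transitivity, totality, and refinement of $\leq_N$ by case analysis on the three clauses of Definition~\ref{defOT}. Your recognition of $\leqot$ as the lexicographic order on the reordered triple $(b,c,a)$ is a clean organizing principle that the paper does not make explicit---the paper instead enumerates (in part) nine transitivity sub-cases, whereas your dominant-coordinate reduction collapses them at once---but the underlying argument is the same.
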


\begin{proof}
Let $A=(a_1,b_1,c_1)$, $B=(a_2,b_2,c_2)$, $C=(a_3,b_3,c_3)\in\TFNR$. 

\ITEM{1.} Reflexivity: Straightforward from Definition \ref{defOT}.

\ITEM{2.} Antisymmetry: Let $A\leqot B$ and $B\leqot A$. If $b_1\neq b_2$ then either $A<_{OT}$ B or $B<_{OT}$ A but not both. If $b_1=b_2$ and $c_1\neq c_2$ then, $A<_{OT} B$ or $B<_{OT} A$ but not both. If $b_1=b_2$ and $c_1=c_2$ and $a_1\neq a_2$ then either $A<_{OT} B$ or $B<_{OT} A$ but not both. Thereby, $a_1=a_2$, $b_1=b_2$ and $c_1=c_2$.

\ITEM{3.} {Transitivity:} If $A\leqot B$ and $B\leqot C$ then
\begin{enumerate}
\item $b_1<b_2$ or ($b_1=b_2$ and $c_1<c_2$) or ($b_1=b_2$ and $c_1=c_2$ and $a_1\leq a_2$),
\item $b_2<b_3$ or ($b_2=b_3$ and $c_2<c_3$) or ($b_2=b_3$ and $c_2=c_3$ and $a_2\leq a_3$).
\end{enumerate}
This proof consists of 9 steps, although here we will demonstrate 4.  Step 1: If $b_1<b_2$ then $b_1<b_3$ and therefore $(a_1,b_1,c_1)<_{\OT}(a_3,b_3,c_3)$. { Step 2: If $b_1<b_2$ and $b_2=b_3$ and $c_2<c_3$ then $b_1<b_2=b_3\leq c_2<c_3$, and therefore $(a_1,b_1,c_1){\leqot} (a_3,b_3,c_3)$. Step 3: If $b_1<b_2$ and $b_2=b_3$ and $c_2=c_3$ and $a_2\leq a_3$ then $b_1<b_3$. Step 4: If $b_1=b_2$ and $c_1=c_2$ and $a_1\leq a_2$ and $b_2=b_3$ and $c_2<c_3$ then $a_1\leq a_2\leq b_1=b_2=b_3\leq c_1=c_2<c_3$. Hence, $c_1<c_3$}. Therefore $(a_1,b_1,c_1)\leq_{OT}(a_3,b_3,c_3)$. 

\ITEM{4.} Totality: Let $(a_1,b_1,c_1)\neq (a_2,b_2,c_2)$. Then $a_1\neq a_2$ or $b_1\neq b_2$ or $c_1\neq c_2$. So, by Definition \ref{defOT} and trichotomy property of real numbers where for any two real numbers, $x$ and $y$, $x<y$ or $y> x$ or $x=y$, it is $(a_1,b_1,c_1)<_{OT}(a_2,b_2,c_2)$ or $(a_2,b_2,c_2)<_{OT}(a_1,b_1,c_1)$.

\ITEM{5.} {Refinement}: Let $(a_1,b_1,c_1)\leq_N (a_2,b_2,c_2)$. If $A=B$ then, since $\leqot$ is reflexive, we have that $(a_1,b_1,c_1)\leqot (a_2,b_2,c_2)$.
If $(a_1,b_1,c_1)\leq_N (a_2,b_2,c_2)$ then by Definition \ref{definicionkytfn} we have $a_1\leq_N a_2$ and $b_1\leq_N b_2$ and $c_1\leq_N c_2$ and ($a_1\neq a_2$ or $b_1\neq b_2$ or $c_1\neq c_2$). Then, if $b_1\neq b_2$ hence $b_1 <b_2$, but, if $b_1=b_2$ and $c_1\neq c_2$ then $b_1=b_2$ and $c_1< c_2$. Finally, if $b_1=b_2$ and $c_1= c_2$ and $a_1\neq a_2$ then $b_1=b_2$ and $c_1= c_2$ and $a_1< a_2$.  Thereby, $A\leqot B$.

In this way the Proposition is proven. 
\end{proof}

\begin{definicion}
Let $A\in \TFNR$. Then,
\begin{enumerate}
\item $A$ is $OT$-positive if $A >_{OT}\crispy{0}$.
\item $A$ is $OT$-negative if $A <_{OT}\crispy{0}$.
\end{enumerate}
\end{definicion}

In $\TFNR$ the elements of the form $(-a,0,a)$ with $a\leq 0$ we calling $0$-\textit{isosceles triangular {fuzzy numbers}} and  {denoted by $\TFNR_0$ to the set of all $0$-isosceles $\TFN$'s. In} particular $\crispy{0}=(0,0,0)$ belongs to $\TFNR_0$.
\begin{tricotomia}\label{positovoornegativo}
If $A\in \TFNR$ then one and only one of the following statements is true: 
\begin{enumerate}
\item $A$ is $OT$-positive;
\item $A$ is $OT$-negative;
\item $A=\crispy{0}$.
\end{enumerate}
\end{tricotomia}

\begin{proof} Straightforwardly, because
$\leqot$ is a linear order on
$\TFN$.
\end{proof}

{Observe that the trichotomy law non holds case consider the natural order  instead of $\leq_{OT}$.}

\begin{proposicion}\label{proposiciondeB}
Let $(a,b,c)$ be a $\TFN$. Then, $(a,b,c)$ is $OT$-positive if and only if $b>0$ or $(0=b< c)$. In the same way, $(a,b,c)$ is $OT$-negative if and only if $b< 0$ or $a<b=c=0$.
\end{proposicion}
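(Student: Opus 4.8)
The statement is a direct unpacking of the definition of $OT$-positivity, namely $A >_{\OT} \crispy{0}$, where $\crispy{0}=(0,0,0)$. The plan is to simply specialize Definition \ref{defOT} with $(a_2,b_2,c_2)=(0,0,0)$ and then read off exactly which triples $(a,b,c)$ with $a\le b\le c$ satisfy $(0,0,0) <_{\OT} (a,b,c)$. Since $\leqot$ is a linear order (Proposition \ref{OTordentotal}), $A >_{\OT} \crispy{0}$ is equivalent to $\crispy{0} \leqot A$ together with $A\neq\crispy{0}$, which by Definition \ref{defOT} means: $0 < b$, or ($0=b$ and $0 < c$), or ($0=b$ and $0=c$ and $0 < a$). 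The last alternative is vacuous: if $b=c=0$ and $a\le b$, then $a\le 0$, so $0<a$ is impossible. Hence the surviving cases collapse to ``$b>0$ or ($0=b<c$)'', which is exactly the claim.

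First I would state the equivalence $A >_{\OT}\crispy{0} \iff \crispy{0}\leqot A \text{ and } A\neq\crispy{0}$, citing that $\leqot$ is total and antisymmetric. Then I would substitute $(a_2,b_2,c_2)=(0,0,0)$ into the three-way disjunction of Definition \ref{defOT} to get the raw condition ``$0<b$, or $0=b$ and $0<c$, or $0=b=c$ and $0\le a$, with $(a,b,c)\neq(0,0,0)$''. Next I would eliminate the third branch: combining $b=c=0$ with the standing constraint $a\le b$ forces $a\le 0$; together with $(a,b,c)\neq(0,0,0)$ this gives $a<0$, but the branch requires $0\le a$ — contradiction — so this branch contributes nothing. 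What remains is ``$b>0$'' or ``$b=0$ and $c>0$'', i.e. ``$b>0$ or $0=b<c$''. The $OT$-negative case is handled symmetrically by considering $A <_{\OT}\crispy{0}$, i.e. $A\leqot\crispy{0}$ and $A\neq\crispy{0}$: Definition \ref{defOT} gives ``$b<0$, or $b=0$ and $c<0$, or $b=c=0$ and $a\le 0$, with $(a,b,c)\neq(0,0,0)$''; the middle branch ``$b=0$ and $c<0$'' is impossible because $b\le c$ forces $c\ge 0$, and the third branch with $(a,b,c)\neq(0,0,0)$ and $a\le b=c=0$ becomes $a<b=c=0$. So $A$ is $OT$-negative iff $b<0$ or $a<b=c=0$, as stated.

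There is essentially no obstacle here; the only subtlety — and the only place where the argument uses something beyond mechanical substitution — is the systematic use of the standing inequality $a\le b\le c$ that every $\TFN$ satisfies (Definition \ref{deftriangularnumero}), which is precisely what kills the degenerate branches in both the positive and the negative case. I would make that dependence explicit so the reader sees why, e.g., the branch ``$b=0$ and $c<0$'' cannot occur, and why ``$b=c=0$ and $0\le a$'' forces $a=0$ hence $A=\crispy{0}$.
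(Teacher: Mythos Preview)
Your proposal is correct and follows exactly the route the paper intends: the paper's own proof is the single line ``Straightforwardly from above definition,'' and your argument is precisely that straightforward unpacking of Definition~\ref{defOT} specialized to a comparison with $\crispy{0}=(0,0,0)$, using the standing constraint $a\le b\le c$ from Definition~\ref{deftriangularnumero} to discard the vacuous branches. There is nothing to add; if anything, your version is more explicit than the paper's.
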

\begin{proof} Straightforwardly from above definition.
\end{proof}
\begin{corolario}\label{positivonegativo}
Let $A\in\TFNR$. If $A$ is $OT$-negative then $-A$ is $OT$-positive. We note that the converse in general is not true.
\end{corolario}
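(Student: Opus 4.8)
The plan is to reduce the statement to the characterisations furnished by Proposition \ref{proposiciondeB} together with the explicit formula $-A=(-c,-b,-a)$ coming from Proposition \ref{pro-oper} (and the closing Remark of Section \ref{preliminares}). Write $A=(a,b,c)$. By Proposition \ref{proposiciondeB}, $A$ being $OT$-negative means that either (i) $b<0$, or (ii) $a<b=c=0$, and I would simply run these two cases.

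In case (i) the middle coordinate of $-A$ is $-b>0$, so Proposition \ref{proposiciondeB} at once yields that $-A$ is $OT$-positive. In case (ii) we get $-A=(0,0,-a)$ with $-a>0$, that is, the middle coordinate of $-A$ is $0$ and it is strictly smaller than the right coordinate $-a$; applying Proposition \ref{proposiciondeB} again gives that $-A$ is $OT$-positive. Since (i) and (ii) exhaust $OT$-negativity, the implication follows. The only subtlety here — the nearest thing to an obstacle — is case (ii): one must notice that negation carries the pattern ``$a<b=c=0$'' to the pattern ``$0=b<c$'' of the $OT$-positive characterisation, rather than to ``$b>0$''; everything else is immediate.

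For the non-validity of the converse I would exhibit a single counterexample. Take $A=(-1,0,1)$. Since $0=b<c=1$, Proposition \ref{proposiciondeB} says $A$ is $OT$-positive, hence not $OT$-negative; yet $-A=(-1,0,1)=A$ is $OT$-positive as well. Thus ``$-A$ is $OT$-positive'' does not force ``$A$ is $OT$-negative''. In fact any $0$-isosceles $\TFN$ other than $\crispy{0}$ (i.e. any element of $\TFNR_0\setminus\{\crispy{0}\}$) serves the same purpose, which I would mention as a remark.
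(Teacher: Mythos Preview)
Your argument is correct and mirrors the paper's approach: the corollary is deduced directly from the characterisations in Proposition~\ref{proposiciondeB} together with $-A=(-c,-b,-a)$, and your counterexample $(-1,0,1)$ is precisely the one the paper exhibits in the example immediately following the corollary. Your additional remark that every nonzero element of $\TFNR_0$ works is also in line with the paper (cf.\ Corollary~\ref{corolarioceroespecial}).
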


\begin{ejemplo}
Let $(-1,0,1)$ is $OT$-positive. Note that $-(-1,0,1)=(-1,0,1)$ is also $OT$-positive.
\end{ejemplo}

\begin{corolario}\label{corolarioceroespecial}
Let $A\in\TFNR_0$. If $A\neq\crispy{0}$ then ${-A}$ is $OT$-positive.
\end{corolario}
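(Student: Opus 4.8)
The plan is to reduce the statement to the defining shape of a $0$-isosceles $\TFN$ and then invoke the characterisation of $OT$-positivity already established. First I would write $A=(-a,0,a)$ as in the definition of $\TFNR_0$. Since $A$ is a genuine $\TFN$ its components are nondecreasing, so $-a\leq 0\leq a$ and hence $a\geq 0$; the hypothesis $A\neq\crispy{0}=(0,0,0)$ then forces $a\neq 0$, whence $a>0$.

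Next I would compute $-A$ explicitly using Proposition \ref{pro-oper}: in the scalar-product rule with $r=-1$ one has $(-1)(a_1,a_2,a_3)=(-a_3,-a_2,-a_1)$, so $-A=(-a,-0,-(-a))=(-a,0,a)=A$. Thus a $0$-isosceles $\TFN$ is its own additive inverse, which is the only genuinely new ingredient in the argument.

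Finally I would apply Proposition \ref{proposiciondeB} to $-A=(-a,0,a)$: its middle component is $0$ and its right component is $a>0$, so the condition ``$0=b<c$'' of that proposition is satisfied and therefore $-A$ is $OT$-positive, as claimed. An alternative packaging uses the $OT$-Trichotomy law: the case $A=\crispy{0}$ is excluded by hypothesis, and if $A$ were $OT$-negative then Corollary \ref{positivonegativo} would already yield that $-A$ is $OT$-positive; the remaining case, in which $A$ itself is $OT$-positive, is precisely the one settled by the identity $-A=A$. I do not expect any real obstacle here: the only point worth flagging is that one cannot simply quote Corollary \ref{positivonegativo}, since a nonzero $0$-isosceles $\TFN$ is itself $OT$-positive rather than $OT$-negative, so the substantive step is the self-inverse identity $-A=A$ combined with Proposition \ref{proposiciondeB}.
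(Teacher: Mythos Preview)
Your proof is correct. The paper states this corollary without proof, but your argument---writing $A=(-a,0,a)$ with $a>0$, observing the self-inverse identity $-A=A$, and then invoking Proposition~\ref{proposiciondeB}---is exactly the route implicitly suggested by the immediately preceding Example, which computes $-(-1,0,1)=(-1,0,1)$ and notes that this is $OT$-positive.
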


\begin{proposicion}\label{proposicionrestadacasicero}
If $A\in \TFNR$ then $A-A\in \TFNR_0$.
\end{proposicion}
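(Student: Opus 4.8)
The plan is to compute $A-A$ explicitly using the subtraction rule for triangular fuzzy numbers and then recognize the result as an element of $\TFNR_0$. Write $A=(a_1,b_1,c_1)$ with $a_1\le b_1\le c_1$. By item (2) of Proposition \ref{pro-oper}, applied with $B=A$, we get
\[
A-A=(a_1-c_1,\;b_1-b_1,\;c_1-a_1)=(a_1-c_1,\;0,\;c_1-a_1).
\]
So the first step is just this substitution; nothing more is needed to produce the candidate triple.

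Next I would check that this triple is genuinely a $0$-isosceles $\TFN$. Setting $t:=c_1-a_1$, we have $A-A=(-t,0,t)$. Since $a_1\le c_1$ we obtain $t=c_1-a_1\ge 0$, hence also $-t=a_1-c_1\le 0$, and therefore $-t\le 0\le t$, which confirms $(-t,0,t)$ satisfies the monotonicity condition $a\le b\le c$ required of a $\TFN$ and is of the precise shape $(-t,0,t)$ used in the definition of $\TFNR_0$. Consequently $A-A\in\TFNR_0$, which is exactly the claim.

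I do not expect any real obstacle here: the argument is a one-line computation followed by a sign check. The only point worth stating carefully is the correspondence with the defining form of $\TFNR_0$ (matching $(-t,0,t)$ and the accompanying sign convention), and noting the special case $a_1=c_1$, where $t=0$ and $A-A=\crispy{0}=(0,0,0)\in\TFNR_0$, so the conclusion still holds. Thus the proof is essentially immediate from Proposition \ref{pro-oper}(2) together with the hypothesis $a_1\le c_1$.
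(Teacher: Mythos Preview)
Your proof is correct and follows essentially the same approach as the paper: compute $A-A$ via the subtraction formula of Proposition~\ref{pro-oper}(2) to obtain $(a-c,0,c-a)$ and then verify this has the $(-t,0,t)$ shape. If anything, your treatment is slightly more careful, since you use $a_1\le c_1$ (allowing the crisp case $t=0$) rather than the strict inequality the paper invokes.
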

\begin{proof}
Let $A=(a,b,c)\in{\TFNR}$. Then $(a,b,c)-(a,b,c)=(a-c,0,c-a)$ i.e. $c-a=-a+c=-(a-c)$ but $a<c$ hence $c-a>0$. In this way the Proposition is prove.
\end{proof}
In the following corollary, we obtain a property of over-additive inverse.
\begin{corolario}\label{teoinverso}
Let $A$ be a ${\TFN}$. Then $A-A\geq_{OT}\crispy{0}$. In addition, $A-A=\crispy{0}$ if and only if $A$ is crisp.
\end{corolario}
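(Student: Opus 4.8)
The plan is to prove the corollary as a direct consequence of Proposition \ref{proposicionrestadacasicero} together with the trichotomy and positivity characterizations already established.

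First I would invoke Proposition \ref{proposicionrestadacasicero}: for any $A=(a,b,c)\in\TFNR$ we have $A-A=(a-c,0,c-a)\in\TFNR_0$. So the middle component of $A-A$ is $0$ and the right component is $c-a\geq 0$. Now I split into two cases. If $A$ is crisp, then $a=b=c$, so $A-A=(0,0,0)=\crispy{0}$, which gives $A-A\geq_{OT}\crispy{0}$ by reflexivity. If $A$ is not crisp, then since $a\leq b\leq c$ and not all are equal, we in fact have $a<c$ (the extremes cannot coincide unless all three do), hence $c-a>0$; then $A-A=(a-c,0,c-a)$ has middle component $0$ and right component $c-a>0$, so by Proposition \ref{proposiciondeB} (the case $0=b<c$) we conclude $A-A$ is $OT$-positive, i.e. $A-A>_{OT}\crispy{0}$. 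In either case $A-A\geq_{OT}\crispy{0}$, proving the first assertion.

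For the equivalence, one direction was just shown: if $A$ is crisp then $A-A=\crispy{0}$. Conversely, if $A-A=\crispy{0}=(0,0,0)$, then comparing with $A-A=(a-c,0,c-a)$ forces $c-a=0$, i.e. $a=c$; combined with $a\leq b\leq c$ this yields $a=b=c$, so $A$ is crisp. This establishes the biconditional and completes the proof.

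There is essentially no obstacle here; the only point requiring a moment of care is the observation that for a $\TFN$ written as $(a,b,c)$ with $a\leq b\leq c$, failing to be crisp is equivalent to $a<c$ (so that one cannot have $a=c$ with $b$ strictly between), which is what lets us pass from ``not crisp'' to ``$c-a>0$'' and invoke the $OT$-positivity criterion of Proposition \ref{proposiciondeB}. Everything else is immediate from the definitions and the cited results, so I would keep the write-up short.
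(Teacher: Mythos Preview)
Your proof is correct and is exactly the argument the paper leaves implicit: the corollary is stated without proof immediately after Proposition~\ref{proposicionrestadacasicero}, and your write-up simply makes explicit the combination of that proposition with the positivity criterion of Proposition~\ref{proposiciondeB} (equivalently, Corollary~\ref{corolarioceroespecial}) and the definition of crisp. There is nothing to add or change.
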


\begin{proposicion} 
 Let $A$, $B$ two $\TFNR$. $A\leqot B$ if and only if $B-A\geqot\crispy{0}$.
\end{proposicion}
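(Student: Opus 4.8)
The plan is to reduce the claim to a case analysis on Definition~\ref{defOT}. Write $A=(a_1,b_1,c_1)$ and $B=(a_2,b_2,c_2)$; by item~2 of Proposition~\ref{pro-oper}, $B-A=(a_2-c_1,\,b_2-b_1,\,c_2-a_1)$, which is a legitimate $\TFN$ because the constraints $a_i\le b_i\le c_i$ give $a_2-c_1\le b_2-b_1\le c_2-a_1$. Unfolding the right-hand side, $B-A\geqot\crispy 0$ (i.e.\ $\crispy 0=(0,0,0)\leqot B-A$) means, by Definition~\ref{defOT}, that $b_2-b_1>0$, or $b_2-b_1=0$ and $c_2-a_1>0$, or $b_2-b_1=0$ and $c_2-a_1=0$ and $a_2-c_1\ge 0$. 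So the statement amounts to matching this trichotomy against the three clauses ($b_1<b_2$; or $b_1=b_2$, $c_1<c_2$; or $b_1=b_2$, $c_1=c_2$, $a_1\le a_2$) defining $A\leqot B$.

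For the implication $A\leqot B\Rightarrow B-A\geqot\crispy 0$, the cleanest route is to first note that $\leqot$ is invariant under adding a fixed $\TFN$: this is immediate from Definition~\ref{defOT}, since the added constants cancel in each comparison occurring in the three clauses. Applying this with the addend $-A$ and using Proposition~\ref{pro-oper}(2) gives $A\leqot B\iff A-A\leqot B-A$; combined with $\crispy 0\leqot A-A$ from Corollary~\ref{teoinverso} and transitivity of $\leqot$, this yields $\crispy 0\leqot B-A$. One can also argue directly by splitting on the clauses of $A\leqot B$ and using $a_i\le b_i\le c_i$ to dispose of the degenerate sub-cases $c_2-a_1=0$, in which $A$ and $B$ collapse to the same crisp number and $B-A=\crispy 0$.

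I expect the converse $B-A\geqot\crispy 0\Rightarrow A\leqot B$ to be the delicate part and the main obstacle. When $b_2>b_1$, the first clause gives $A\leqot B$; when $b_2=b_1$, $c_2=a_1$, $a_2\ge c_1$, the constraints $a_i\le b_i\le c_i$ force $A=B=\crispy{a_1}$, so again $A\leqot B$. The hard case is $b_2=b_1$ with $c_2-a_1>0$: one must still recover the order of $c_1$ and $c_2$ (and, if these coincide, of $a_1$ and $a_2$), yet positivity of the spread $c_2-a_1$ of $B-A$ does not by itself provide it. Viewed through the invariance above, this is precisely the step $\crispy 0\leqot B-A\Rightarrow A-A\leqot B-A$, which requires $A-A=\crispy 0$, i.e.\ $A$ crisp by Corollary~\ref{teoinverso}. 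Bridging between the comparison point $\crispy 0$ in the statement and the quantity $A-A$ that the arithmetic of Proposition~\ref{pro-oper}(2) actually returns is thus the crux, and where I would concentrate the work.
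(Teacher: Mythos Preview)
Your forward direction is essentially the paper's: it proceeds by the same three-case split on Definition~\ref{defOT}, showing in each that the relevant component of $B-A$ has the required sign (the translation-invariance shortcut you offer as an alternative is not in the paper but is valid). Where you diverge is in the converse: the paper dismisses it in one line as ``not hard to see from Definitions~\ref{deftriangularnumero} and~\ref{defOT}'', while you flag the case $b_1=b_2$, $c_2-a_1>0$ as the crux. Your suspicion is well founded --- the converse is in fact \emph{false}. Take $A=(0,1,3)$ and $B=(0,1,2)$: then $B-A=(-3,0,2)\geqot\crispy{0}$ by the second clause of Definition~\ref{defOT}, yet $B<_{\OT}A$ since $b_1=b_2=1$ and $c_2=2<3=c_1$. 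Exactly as you diagnosed, positivity of the spread $c_2-a_1$ carries no information about the sign of $c_2-c_1$ once $A$ is non-crisp, and the gap between $\crispy{0}$ and $A-A$ cannot be bridged. There is no missing idea for you to supply here; the proposition as stated does not hold, and the paper's one-line handling of the converse is in error.
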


\begin{proof}
Let $\NFT{1}$, $\NFT{2}\in\TFNR$ such that $\NFT{1}\leqot\NFT{2}$. If $b_1<b_2$ then $0<b_2-b_1$. If $b_1=b_2$ and $a_1\leq c_1<c_2$ hence $0<c_2-a_1$. If $b_1=b_2$, $c_1=c_2$ and $a_1\leq a_2$ then {$c_2-a_1\geq 0$. Observe that case $c_2-a_1= 0$ then $a_1=b_1=c_1=a_2=b_2=c_2$}. Therefore, if $A\leqot B$ then $B-A\geqot\crispy{0}$. It is not hard to see that the converse is true from Definitions \ref{deftriangularnumero} and \ref{defOT}.
\end{proof}
\begin{observacion}
The order {$\preceq$} proposed in \cite{valentina2021} does not satisfy the previous property {since $(1,2,3)\prec (1,2,4)$, and  $(1,2,4)-(1,2,3)=(-2,0,3)\prec\crispy{0}$}.
\end{observacion}
\vspace{5mm}

\begin{teorema}
Let $A$ and $B$ in $\TFNR$. If $A$ and $B$ are $OT$-positive then $A+B$ is also $OT$-positive.
\end{teorema}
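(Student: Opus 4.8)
The plan is to reduce the claim to the characterization of $OT$-positivity given in Proposition \ref{proposiciondeB}, namely that a $\TFN$ $(a,b,c)$ is $OT$-positive if and only if $b>0$ or $(0=b<c)$. So I would write $A=(a_1,b_1,c_1)$ and $B=(a_2,b_2,c_2)$, assume both are $OT$-positive, and by Proposition \ref{pro-oper}(1) observe that $A+B=(a_1+a_2,\,b_1+b_2,\,c_1+c_2)$. The goal becomes: show that $b_1+b_2>0$, or else $b_1+b_2=0$ and $c_1+c_2>0$.

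The natural approach is a short case analysis on the middle components. By Proposition \ref{proposiciondeB}, $OT$-positivity of $A$ splits into (a) $b_1>0$, or (b) $b_1=0$ and $c_1>0$; similarly for $B$. First I would handle the case where at least one of $b_1,b_2$ is strictly positive: since each of $b_1,b_2$ is $\geq 0$ in all cases (in case (a) it is positive, in case (b) it is zero), the sum $b_1+b_2$ is then strictly positive, so $A+B$ is $OT$-positive. The remaining case is $b_1=b_2=0$, which forces both $A$ and $B$ to be in subcase (b), hence $c_1>0$ and $c_2>0$; then $b_1+b_2=0$ while $c_1+c_2>0$, so again $A+B$ is $OT$-positive by Proposition \ref{proposiciondeB}. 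That exhausts all cases.

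The main thing to get right — the only place there is any subtlety at all — is the bookkeeping observation that $OT$-positivity always entails $b\geq 0$ (not merely the disjunction as literally stated), so that adding two middle components can never produce a negative number; once that is noted, the positivity of the sum of middle terms in the "not both zero" case is immediate, and the "both zero" case is forced into the $0=b<c$ branch. I would also explicitly invoke Proposition \ref{pro-oper} to justify that $A+B$ is again a $\TFN$ (so that Proposition \ref{proposiciondeB} applies to it), and note that $A+B \geq_N \crispy{0}$ is \emph{not} enough on its own, which is precisely why the computation is routed through Proposition \ref{proposiciondeB} rather than through the refinement clause of Definition \ref{def-ordenadmisible}. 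No step here is a genuine obstacle; the proof is a two-line case split once the right lemma is cited.
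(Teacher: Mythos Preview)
Your proposal is correct and follows essentially the same route as the paper: invoke Proposition~\ref{proposiciondeB} to extract $b_i\geq 0$ and $c_i>0$, use Proposition~\ref{pro-oper}(1) for the form of $A+B$, then split into the cases ``some $b_i>0$'' versus ``$b_1=b_2=0$''. The only cosmetic difference is that the paper records $c_1>0$ and $c_2>0$ up front (valid since $c\geq b>0$ in the first branch of the characterization), whereas you defer this to the $b_1=b_2=0$ case; the logic is identical.
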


\begin{proof}[Proof.] Let $A=(a_1,b_1,c_1)$ and $B=(a_2,b_2,c_2)$ two $OT$-positive $\TFNR$. Then, from Proposition \ref{proposiciondeB} we have that $b_1\geq 0$, $b_2\geq 0$, $c_1>0$ and $c_2>0$. So, (1) if $b_1>0$ or $b_2>0$ then $b_1+b_2>0$. (2) if $b_1=b_2=0$ then $b_1+b_2=0$ and, because $c_1>0$ and $c_2>0$ and $c_1+c_2>0$. Therefore, in both cases, $A+B>_{OT}\crispy{0}$.
\end{proof}

\begin{teorema}
Let $A$, $B$ and $C$ in $\TFNR$. If $A\leqot B$ then $A+C\leqot B+C$.
\end{teorema}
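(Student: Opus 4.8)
The plan is to push the hypothesis through the componentwise formula for addition and then appeal to the monotonicity of $+$ on $\con{R}$. Write $A=(a_1,b_1,c_1)$, $B=(a_2,b_2,c_2)$, $C=(a_3,b_3,c_3)$. By Proposition~\ref{pro-oper}(1) we have $A+C=(a_1+a_3,\,b_1+b_3,\,c_1+c_3)$ and $B+C=(a_2+a_3,\,b_2+b_3,\,c_2+c_3)$, both of which are genuine $\TFN$'s since $\TFNR$ is closed under addition.

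Next I would unfold $A\leqot B$ into the three mutually exclusive cases of Definition~\ref{defOT} and track each one separately. If $b_1<b_2$, then adding $b_3$ to both sides gives $b_1+b_3<b_2+b_3$, which is already the first clause of $A+C\leqot B+C$. If $b_1=b_2$ and $c_1<c_2$, then $b_1+b_3=b_2+b_3$ and $c_1+c_3<c_2+c_3$, which is the second clause. If $b_1=b_2$, $c_1=c_2$ and $a_1\le a_2$, then $b_1+b_3=b_2+b_3$, $c_1+c_3=c_2+c_3$ and $a_1+a_3\le a_2+a_3$, which is the third clause. Hence in every case $A+C\leqot B+C$.

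There is no real obstacle here: the statement amounts to saying that the order $\leqot$, read lexicographically on the coordinates in the order $(b,c,a)$, is invariant under translation by the fixed triple $C$, and this is immediate because adding the same real number to both sides of an equality (resp. of a strict inequality) in $\con{R}$ preserves it. The only points requiring a little care are to quote the genuinely componentwise formula for $A+C$ from Proposition~\ref{pro-oper} (addition, unlike subtraction, does not swap the outer coordinates), and to make the three-way case split of Definition~\ref{defOT} exhaustive so that linearity/totality of $\leqot$ is not implicitly invoked.
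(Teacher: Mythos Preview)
Your proof is correct and follows exactly the same approach as the paper: a three-way case split on Definition~\ref{defOT} together with the componentwise addition formula from Proposition~\ref{pro-oper}, reducing everything to translation-invariance of $<$ and $=$ on $\con{R}$. In fact you spell out all three cases, whereas the paper treats only the first and declares the remaining two analogous.
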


\begin{proof}[Proof.] Let $A=(a_1,b_1,c_1)$, $B=(a_2,b_2,c_2)$ and $C=(a_3,b_3,c_3)$ be three $\TFN$'s. If  $A\leqot B$ we get the following cases
\begin{enumerate}
\item $b_1<b_2$ or
\item $b_1=b_2$ and $c_1<c_2$ or
\item $b_1=b_2$ and $c_1=c_2$ and $a_1\leq a_2$.
\end{enumerate}
For the first case, we have: If $b_1<b_2$ then $b_1+b_3<b_2+b_3$. Hence $A+C\leqot B+C$. The other cases are analogous.

In this way the Proposition is prove.
\end{proof}
\begin{corolario}\label{sumacontinua}
Let $A$, $B$ $C$ and $D$ be $\TFN$. If $A\leqot B$ and $C\leqot D$ then $A+C\leqot B+D$.
\end{corolario}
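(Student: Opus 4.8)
The plan is to reduce Corollary \ref{sumacontinua} to the preceding theorem (monotonicity of addition on one side) applied twice, together with transitivity of $\leqot$ established in Proposition \ref{OTordentotal}. Concretely, from $A\leqot B$ the previous theorem gives $A+C\leqot B+C$, since adding the fixed $\TFN$ $C$ to both sides preserves the order. Independently, from $C\leqot D$ the same theorem (now adding the fixed $\TFN$ $B$ to both sides, and using commutativity of addition on $\TFNR$ noted in Remark \ref{rem-ult-sec2}) gives $B+C = C+B \leqot D+B = B+D$. Chaining these two relations through transitivity of $\leqot$ yields $A+C\leqot B+D$, which is exactly the claim.

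The key steps, in order, are: first invoke the previous theorem with the triple $(A,B,C)$ to obtain $A+C\leqot B+C$; second invoke it with the triple $(C,D,B)$ to obtain $C+B\leqot D+B$; third rewrite $C+B$ as $B+C$ and $D+B$ as $B+D$ using commutativity; fourth apply transitivity of $\leqot$ to the chain $A+C\leqot B+C\leqot B+D$ to conclude $A+C\leqot B+D$. I would present this essentially verbatim, perhaps displaying the chain $A+C\leqot B+C = C+B\leqot D+B = B+D$ on one line for clarity.

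There is no genuine obstacle here; the statement is a routine ``two applications plus transitivity'' argument and the only thing to be careful about is the bookkeeping of commutativity so that the middle term $B+C$ of the chain matches on both sides. One could alternatively give a direct case analysis on the defining clauses of $\leqot$ for $A\leqot B$ and $C\leqot D$ (nine combinations, as in the transitivity proof), but that is strictly more work and less transparent, so I would not take that route. The cleanest write-up simply states the two inequalities coming from the previous theorem, notes the commutativity identification, and closes with transitivity.

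\begin{proof}[Proof.]
Since $A\leqot B$, applying the previous theorem with $C$ fixed gives $A+C\leqot B+C$. Since $C\leqot D$, applying the previous theorem with $B$ fixed gives $C+B\leqot D+B$, and by commutativity of addition on $\TFNR$ (Remark \ref{rem-ult-sec2}) this means $B+C\leqot B+D$. Hence
$$A+C\leqot B+C\leqot B+D,$$
and by transitivity of $\leqot$ (Proposition \ref{OTordentotal}) we conclude $A+C\leqot B+D$.
\end{proof}
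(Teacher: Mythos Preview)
Your proof is correct and is exactly the intended argument: the paper states this result as an immediate corollary of the preceding theorem without giving a proof, and your two applications of that theorem together with commutativity (Remark \ref{rem-ult-sec2}) and transitivity (Proposition \ref{OTordentotal}) are precisely what makes it a corollary.
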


{
\begin{proposicion}
Let $A$ and $B$ be two $\TFN$ and $\crispy{r}\in\crispy{\con{R}}$. \begin{enumerate}
\item If $A\leqot B$ and $\crispy{r}\geqot \crispy{0}$ then $\crispy{r}A\leqot \crispy{r}B$,
\item If $A\leqot B$ and $\crispy{r}\leqot \crispy{0}$ then $\crispy{r}A\geqot \crispy{r}B$.
\end{enumerate} 
\end{proposicion}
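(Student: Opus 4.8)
The plan is to treat the two statements symmetrically, reducing the second to the first (or proving both directly) by splitting into the three cases that define $\leqot$ and tracking how scalar multiplication acts on the triple, using Proposition \ref{pro-oper}(3). I would first dispose of the trivial sub-case $\crispy{r}=\crispy{0}$, where $\crispy{r}A=\crispy{r}B=\crispy{0}$ and both conclusions hold by reflexivity. So assume $r\neq 0$.

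For item (1), assume $\crispy{r}>_{OT}\crispy{0}$, which by Proposition \ref{proposiciondeB} (applied to the crisp number $(r,r,r)$) forces $r>0$. Writing $A=(a_1,b_1,c_1)$ and $B=(a_2,b_2,c_2)$, Proposition \ref{pro-oper}(3) gives $\crispy{r}A=(ra_1,rb_1,rc_1)$ and $\crispy{r}B=(ra_2,rb_2,rc_2)$. Now run through the three clauses of $A\leqot B$: if $b_1<b_2$ then $rb_1<rb_2$ since $r>0$; if $b_1=b_2$ and $c_1<c_2$ then $rb_1=rb_2$ and $rc_1<rc_2$; if $b_1=b_2$, $c_1=c_2$ and $a_1\leq a_2$ then $rb_1=rb_2$, $rc_1=rc_2$ and $ra_1\leq ra_2$. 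In each case the corresponding clause of Definition \ref{defOT} holds for $\crispy{r}A$ and $\crispy{r}B$, so $\crispy{r}A\leqot\crispy{r}B$.

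For item (2), assume $\crispy{r}<_{OT}\crispy{0}$, so $r<0$, and here Proposition \ref{pro-oper}(3) gives the order-reversing formula $\crispy{r}A=(rc_1,rb_1,ra_1)$ and $\crispy{r}B=(rc_2,rb_2,ra_2)$ — the components get swapped. Again split on the three clauses of $A\leqot B$: if $b_1<b_2$ then $rb_1>rb_2$ (middle components), giving $\crispy{r}B<_{OT}\crispy{r}A$; if $b_1=b_2$ and $c_1<c_2$ then $rb_1=rb_2$ and the new third component of $\crispy{r}B$, namely $ra_2$... wait — I need the new \emph{third} component, which is $ra_1$ for $\crispy{r}A$ and $ra_2$ for $\crispy{r}B$; but the discriminating quantity after the middle is the third coordinate, and $c_1<c_2$ controls the \emph{first} coordinate of the swapped triple, not the third, so one checks: $rc_1>rc_2$ means the first coordinate decreased, but with equal middle coordinates the tie-break in Definition \ref{defOT} looks at the third coordinate next, i.e. at $ra_1$ versus $ra_2$, about which clause (2) says nothing directly. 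This is the delicate point: when $b_1=b_2$, the hypothesis $c_1<c_2$ together with $a_i\le b_i\le c_i$ and $b_1=b_2$ does \emph{not} pin down the order of $a_1$ and $a_2$, so after multiplying by $r<0$ and swapping, one must argue that the new middle coordinates $rb_1=rb_2$ are equal and the new third coordinates satisfy $ra_1\ge ra_2$? No — that also need not hold. The correct resolution: in $\leqot$ on the swapped triples $(rc_i,rb_i,ra_i)$, after equal middles the next comparison is on the third slot $ra_i$; but if $ra_1\ne ra_2$ the comparison is decided there and we may get the wrong direction. Hence I expect the genuine obstacle to be exactly this: I must re-examine whether the statement is even true as phrased, and if so, exploit that $\leqot$ on $(rc_i,rb_i,ra_i)$ is governed by the middle coordinate $rb_i$ \emph{first} — so if $b_1<b_2$ then $rb_2<rb_1$ and $\crispy{r}B<_{OT}\crispy{r}A$ cleanly, and if $b_1=b_2$ then in both clauses (2) and (3) the middles $rb_1=rb_2$ coincide and I compare third slots $ra_1,ra_2$: clause (3) gives $a_1\le a_2$ hence $ra_1\ge ra_2$, done; clause (2) gives $c_1<c_2$, and since $b_1=b_2$ we only know $a_1,a_2\le b_1$, so the safe route is to note that if $ra_1\ge ra_2$ we are done, and if $ra_1<ra_2$ then since the \emph{first} slots satisfy $rc_1>rc_2$ we still get $\crispy{r}B\leqot\crispy{r}A$ is false — so the real proof must instead observe that with $b_1 = b_2$, the $\leqot$-comparison of $(rc_1,rb_1,ra_1)$ and $(rc_2,rb_2,ra_2)$ reduces to comparing the pairs $(rc_i, ra_i)$ lexicographically with \emph{first} coordinate $rc_i$ dominant, and $c_1<c_2\Rightarrow rc_1>rc_2\Rightarrow \crispy{r}B<_{OT}\crispy{r}A$ regardless of the $ra_i$. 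That is the key observation, and once it is in hand all three cases close; so I would structure the write-up to emphasize that in Definition \ref{defOT} the \emph{middle} coordinate is primary and the \emph{third} is secondary, making the sign flip from Proposition \ref{pro-oper}(3) propagate correctly, with the $c_1<c_2$ case handled by the dominance of the (swapped-in-first-slot, but tie-broken-by-first) coordinate $rc_i$.
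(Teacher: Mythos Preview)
Your treatment of item (1) is fine and matches what the paper calls ``straightforward''. The problem is item (2): you correctly isolate the delicate sub-case $b_1=b_2$, $c_1<c_2$, $r<0$, but your resolution is wrong. You assert that, once the middle coordinates $rb_1=rb_2$ agree, the $\leqot$-comparison of $(rc_1,rb_1,ra_1)$ and $(rc_2,rb_2,ra_2)$ next looks at $rc_i$ (``first coordinate $rc_i$ dominant''). It does not. By Definition~\ref{defOT} the secondary tie-break is always on the \emph{third} component of the triple, and for $\crispy{r}A=(rc_1,rb_1,ra_1)$ that third component is $ra_1$, not $rc_1$. So the hypothesis $c_1<c_2$ ends up controlling only the \emph{tertiary} slot of the swapped triple, while the secondary slot is governed by $a_1,a_2$, about which clause (2) says nothing. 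Your ``key observation'' therefore does not hold, and the case does not close.

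In fact item (2) is false as stated, so no argument can close it. Take $A=(0,1,2)$, $B=(-1,1,3)$ and $r=-1$. Then $A\leqot B$ (equal middles $1=1$, and $2<3$) and $\crispy{r}\leqot\crispy{0}$, while by Proposition~\ref{pro-oper}(3) we get $\crispy{r}A=(-2,-1,0)$ and $\crispy{r}B=(-3,-1,1)$. The middles agree ($-1=-1$) and the third coordinates satisfy $0<1$, so $\crispy{r}A<_{\OT}\crispy{r}B$, contradicting the claimed $\crispy{r}A\geqot\crispy{r}B$. The paper's one-word proof does not address this; the defect lies in the proposition itself, not just in your write-up.
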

\begin{proof}
 Straightforward.
\end{proof}
}

\begin{definicion}
{Given $A$, $B$ in $\TFNR$ we} define:
\begin{enumerate}
\item Maximum: \begin{center}
$\Maxx_{\leqot }\{A,B\}=\begin{cases} A,\textrm{ if }B\leqot A;\\
\textrm{ or }\\
B,\textrm{ if }A\leqot B.\end{cases}$
\end{center}
\item Minimum: \begin{center}
$\Minn_{\leqot}\{A,B\}=\begin{cases} B,\textrm{ if }B\leqot A;\\
\textrm{ or }\\
A,\textrm{ if }A\leqot B.\end{cases}
$\end{center}
\end{enumerate}
\end{definicion}

\begin{observacion}
The algebras $(\NFR,+)$ and $(\NFR,\cdot)$,  are not invertible commutative monoids (see \cite[pag.7]{kandasamy2002smarandache}). On the other hand $(\NFR,+,\cdot)$ is not a semiring in the sense \cite{kandasamy2002smarandache}.
\end{observacion}

\section{Triangular Fuzzy Space}\label{TriangularfuzzySpace}
An $n$-tuple of triangular fuzzy numbers $(A_{1},A_{2},A_{3},\ldots,A_{n})$ for an integer $n\geq 1$ is called an $n$-dimensional point or an $n$-dimensional vector. In this case, the $\TFN$'s $A_{1},A_{2},A_{3},\ldots,A_{n}$ are called of coordinates or components of the vector. The collection of all $n$-dimensional vectors is called the vector space of $n$-tuples, or simply $n$-space. We denote this space by $\TFNR^n$. In this paper we shall usually denote vectors by  $\Vector{A},\Vector{B},\Vector{C},\ldots$ and components by the corresponding letters $A$, $B$, $C,~\ldots$ with a subindex in $\{1,...,n\}$. Thus, we write $$\Vector{A}=(A_{1},A_{2},A_{3},\ldots,A_{n}).$$ To convert $\TFNR^n$, into an algebraic system, we introduce two vector operations called addition and multiplication by scalars. The word scalar is used here as a synonym for crisp fuzzy number.

\begin{definicion}\label{operacionesvectoriales}
Two vectors $\Vector{A}$ and $\Vector{B}$ in $\TFNR^n$. The sum $\Vector{A}+\Vector{B}$ is defined to be the vector obtained by adding corresponding components: $$\Vector{A}+\Vector{B}=(A_{1}+B_{1},A_{2}+B_{2},\ldots,A_{n}+B_{n}).$$ If $\crispy{r}\in\crispy{\con{R}}$, we define $\crispy{r}\Vector{A}$ or $\Vector{A}\crispy{r}$ to be the vector obtained by multiplying each component
of $\Vector{A}$ by $\crispy{r}$: $$\crispy{r}\Vector{A}=(\crispy{r}A_{1},\crispy{r}A_{2},\crispy{r}A_{3},\ldots,\crispy{r}A_{n})$$
\end{definicion}
From this definition it is easy to verify the following properties of these operations

\begin{proposicion}\label{operacionesenspacevector}
Vector addition is commutative,
$$\Vector{A}+\Vector{B}=\Vector{B}+\Vector{A},$$
and associative
$$\Vector{A}+(\Vector{B}+\Vector{C})=(\Vector{A}+\Vector{B})+\Vector{C}.$$
Multiplication by scalars is associative,
$\crispy{r}({\crispy{t}\Vector{A}}) = (\crispy{r}\crispy{t})\Vector{A}$
and satisfies the two distributive laws
$$\crispy{r}(\Vector{A}+\Vector{B})={\crispy{r}\Vector{A}+\crispy{r}\Vector{B},}$$ and $$(\crispy{r} + \crispy{t})\Vector{A} ={ \crispy{r}\Vector{A} + \crispy{t}\Vector{A}.}$$
\end{proposicion}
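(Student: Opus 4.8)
The plan is to push everything down to single triangular fuzzy numbers. Since, by Definition \ref{operacionesvectoriales}, both vector addition and multiplication by a scalar on $\TFNR^n$ act coordinate by coordinate, each of the five asserted identities holds in $\TFNR^n$ exactly when the corresponding identity holds in $\TFNR$ in every coordinate. So it suffices to establish, for all $A=(a_1,b_1,c_1)$, $B=(a_2,b_2,c_2)$, $C\in\TFNR$ and all $\crispy{r},\crispy{t}\in\crispy{\con{R}}$: (a) $A+B=B+A$; (b) $A+(B+C)=(A+B)+C$; (c) $\crispy{r}(\crispy{t}A)=(\crispy{r}\crispy{t})A$; (d) $\crispy{r}(A+B)=\crispy{r}A+\crispy{r}B$; (e) $(\crispy{r}+\crispy{t})A=\crispy{r}A+\crispy{t}A$.

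For (a) and (b) I would simply invoke Proposition \ref{pro-oper}(1): $A+B=(a_1+a_2,b_1+b_2,c_1+c_2)$, so commutativity and associativity are inherited verbatim from $(\con{R},+)$ applied in each of the three slots, as already recorded in Remark \ref{rem-ult-sec2}. For (c), first note $\crispy{r}\crispy{t}=\crispy{rt}$ (crisp numbers form a field isomorphic to $\con{R}$, Remark \ref{rem-ult-sec2}), then case-split on the signs of $r$ and $t$ using the scalar rule of Proposition \ref{pro-oper}(3): in the cases $r,t\ge 0$ and $r,t<0$ the orientation of the triple is reversed an even number of times, while for $r\ge 0>t$ or $t\ge 0>r$ it is reversed once; in every case one reads off the triple $(rta_1,rtb_1,rtc_1)$, or its reverse, which is exactly what $\crispy{rt}A$ yields. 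For (d), case-split on the sign of $r$ only: for $r\ge 0$ both sides equal $(r(a_1+a_2),r(b_1+b_2),r(c_1+c_2))$ by Proposition \ref{pro-oper}(1),(3) and distributivity in $\con{R}$; for $r<0$ both sides reverse to $(r(c_1+c_2),r(b_1+b_2),r(a_1+a_2))$, so they again agree.

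The step I expect to be the real obstacle is (e). Writing $\crispy{r}+\crispy{t}=\crispy{r+t}$, I would split on the signs of $r$, $t$ and $r+t$. When $r$ and $t$ have the same sign (the zero cases being trivial) each of $\crispy{r}A$, $\crispy{t}A$ and $\crispy{r+t}A$ carries the same orientation, and (e) collapses to coordinatewise distributivity in $\con{R}$ exactly as in (d). The delicate subcase is $r$ and $t$ of opposite sign, say $r\ge 0>t$: then $\crispy{r}A=(ra_1,rb_1,rc_1)$ and $\crispy{t}A=(tc_1,tb_1,ta_1)$ carry opposite orientations, so Proposition \ref{pro-oper}(1) gives $\crispy{r}A+\crispy{t}A=(ra_1+tc_1,(r+t)b_1,rc_1+ta_1)$, whose outer entries mix an ``$a$''-term with a ``$c$''-term and so need not match the outer entries of $\crispy{r+t}A$ (equality there would force $a_1=c_1$, i.e. $A$ crisp). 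Consequently I would expect (e) to be the point where the hypotheses must be pinned down carefully — in particular I would anticipate needing $\crispy{r}$ and $\crispy{t}$ to have the same sign (so that no cancellation between the two terms occurs); under that restriction the argument is the routine coordinatewise computation sketched for (d), and this sign-compatibility condition is, I expect, the crux of the proof.
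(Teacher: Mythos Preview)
Your overall strategy---reduce each identity to a coordinatewise statement about single $\TFN$'s via Definition \ref{operacionesvectoriales}, then invoke Proposition \ref{pro-oper} and the field structure of $\con{R}$---is exactly what the paper does. The paper only writes out commutativity and the law $(\crispy{r}+\crispy{t})\Vector{A}=\crispy{r}\Vector{A}+\crispy{t}\Vector{A}$, dispatching the coordinate-level step $(\crispy{r}+\crispy{t})A_i=\crispy{r}A_i+\crispy{t}A_i$ by a bare appeal to Remark \ref{rem-ult-sec2}, and declares the rest analogous.

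Where you differ from the paper is that you are \emph{right} to flag (e) as the crux, and in fact your analysis uncovers a genuine defect in the paper rather than in your own argument. Your mixed-sign computation is correct: for $r\geq 0>t$ one gets $\crispy{r}A+\crispy{t}A=(ra_1+tc_1,(r+t)b_1,rc_1+ta_1)$, and matching this against $(\crispy{r}+\crispy{t})A$ forces $a_1=c_1$. Concretely, with $A=(0,1,2)$, $r=1$, $t=-1$ one has $(\crispy{r}+\crispy{t})A=\crispy{0}A=(0,0,0)$ while $\crispy{r}A+\crispy{t}A=(0,1,2)+(-2,-1,0)=(-2,0,2)$. So the second distributive law, as stated without any sign restriction on $\crispy{r},\crispy{t}\in\crispy{\con{R}}$, is false; the paper's proof hides this behind Remark \ref{rem-ult-sec2}, which itself asserts the distributivity in $\TFNR$ without proof. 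Your proposed fix---requiring $\crispy{r}$ and $\crispy{t}$ to have the same sign (equivalently, working over the semifield of nonnegative crisp numbers, which is what the semi-vector space framework of Remark \ref{observaciondeorden} really needs)---is the correct repair.
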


\begin{proof} We will only prove the commutative and distributive law. Let $\Vector{A}$ and $\Vector{B}$ two $\TFNR^n$ and $\crispy{r}\in\crispy{\con{R}}$. For the commutativity we have
\begin{align*}
\Vector{A}+\Vector{B} 
 = & (A_1+B_1,A_2+B_2,...,A_n+B_n)\\
  = & (B_1+A_1,B_2+A_2,...,B_n+A_n)\\
    = & \Vector{B}+\Vector{A}.
\end{align*}
On the other hand, for the distributive law we obtain
\begin{align*}
(\crispy{r}+\crispy{t})\Vector{A} = & ((\crispy{r}+\crispy{t})A_1,(\crispy{r}+\crispy{t})A_2,...,(\crispy{r}+\crispy{t})A_n)\\
 = & (\crispy{r}A_1+\crispy{t}A_1,\crispy{r}A_2+\crispy{t}A_2,...,\crispy{r}A_n+\crispy{t}A_n) \,\,\mbox{{ by Remark \ref{rem-ult-sec2}}}\\
= & (\crispy{r}A_1,\crispy{r}A_2,...,\crispy{r}A_n)+(\crispy{t}A_1,\crispy{t}A_2,...,\crispy{t}A_n)\\
  = & \crispy{r}(A_1,A_2,...,A_n)+ \crispy{t}(A_1,A_2,...,A_n)\\
    = & \crispy{r}\Vector{A}+\crispy{t}\Vector{A}.
\end{align*}
The proof of the remaining properties, we use similar arguments to those given above. Concluding our proof.
\end{proof}

\begin{observacion}\label{observaciondeorden}
The vector with all components $\crispy{0}$ is called the zero vector and is denoted by $\Vector{0}$. It has the property that $\Vector{A}+\Vector{0} = \Vector{A}$ for every vector $\Vector{A}$; in other words, $\Vector{0}$ is an identity element for vector addition. The vector $(-\crispy{1})\Vector{A}$ is also denoted by $-\Vector{A}$ and is called the opposite of $\Vector{A}$ and notice that the equation $\Vector{A}+(-\Vector{A}) = \Vector{0}$ is generally not true. For this, it suffices to consider $\Vector{A}\in \TFNR^n$ wet get
$\Vector{A}-\Vector{A}=(A_1-A_1,...,A_n-A_n)$ from Corollary \ref{teoinverso} have $A_i-A_i>_{OT} \crispy{0}$ whenever $A_i$ is not crisp. Note also that $\Vector{A}\crispy{0} =\Vector{0}$ and that $\Vector{A}\crispy{1}=\Vector{A}$ for all $\Vector{A}\in\TFNR^n$. Therefore, $\TFNR^n$, endowed with the addition and scalar product of Definition \ref{operacionesvectoriales}, is not a vector space on the field of crisp fuzzy numbers. Nevertheless, it is a semi-vector space in the sense of Vasantha Kandasamy \cite{kandasamy2002smarandache} and \cite{milfont2021aggregation}.
\end{observacion}

\begin{definicion}
Let $(A_1,A_2,...,A_n)$ and $(B_1,B_2,...,B_n)$ in $\TFNR^n$. We define the relation $\leq_{\OT^n}$ by
$$(A_1,A_2,...,A_n)\leq_{\OT^n}(B_1,B_2,...,B_n)\Longleftrightarrow \left\{\begin{array}{c}A_1\leqot B_1\textrm{ and }\\
A_2\leqot B_2\textrm{ and }\\ \vdots \\A_n\leqot B_n.\end{array}\right.$$
\end{definicion}

\begin{proposicion}
Let $\Vector{A}\in\TFNR^n$. Then $\Vector{A}-\Vector{A}\geq_{OT^n}\Vector{0}$.
\end{proposicion}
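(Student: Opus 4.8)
The plan is to reduce the claim to its one-dimensional analogue, which is already available as Corollary \ref{teoinverso}, by unwinding the definitions componentwise. Write $\Vector{A}=(A_1,A_2,\ldots,A_n)$ with each $A_i\in\TFNR$. First I would use Definition \ref{operacionesvectoriales} (applied with the scalar $\crispy{-1}$, or directly with the difference) to record that
\[
\Vector{A}-\Vector{A}=(A_1-A_1,\,A_2-A_2,\,\ldots,\,A_n-A_n),
\]
and recall from Remark \ref{observaciondeorden} that $\Vector{0}=(\crispy{0},\crispy{0},\ldots,\crispy{0})$.

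Next I would invoke the definition of $\leq_{\OT^n}$: to establish $\Vector{A}-\Vector{A}\geq_{\OT^n}\Vector{0}$ it suffices to check $A_i-A_i\geqot\crispy{0}$ for every index $i\in\{1,\ldots,n\}$. But this is precisely the content of Corollary \ref{teoinverso}, which asserts $A-A\geqot\crispy{0}$ for an arbitrary $\TFN$ $A$ (with equality exactly when $A$ is crisp). Applying that corollary to each coordinate $A_i$ closes the argument.

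There is no real obstacle here: the statement is a routine componentwise lift of Corollary \ref{teoinverso}, and the only thing to be careful about is citing the correct definitions ($\leq_{\OT^n}$ and the vector difference) so the reduction is explicit. If one wishes to add a remark, it would be that $\Vector{A}-\Vector{A}=\Vector{0}$ holds if and only if every component $A_i$ is crisp, which again follows coordinatewise from the equality case of Corollary \ref{teoinverso}.
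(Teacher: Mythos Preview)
Your argument is correct and is essentially the same as the paper's: the paper simply cites Remark \ref{observaciondeorden}, which already records that $\Vector{A}-\Vector{A}=(A_1-A_1,\ldots,A_n-A_n)$ and that each $A_i-A_i\geqot\crispy{0}$ by Corollary \ref{teoinverso}. Your version just makes this componentwise reduction explicit.
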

\begin{proof}
Straightforward from Remark \ref{observaciondeorden}.
\end{proof}

The set $\TFNR^n_0$ denote the set of elements of the form $(A_1,...,A_n)$ such that $A_i\in\TFNR_0$ for all $i=1,...n$.

\begin{proposicion}
If $\Vector{A}\in\TFNR^n$ then $\Vector{A}-\Vector{A}\in \TFNR^n_0$.
\end{proposicion}
\begin{proof}
Straightforward from Proposition \ref{proposicionrestadacasicero} and Remark \ref{observaciondeorden}.
\end{proof}

\section{Averaging Functions on $\TFN$}\label{Averaging-Functions}

  \begin{definicion}
A function $\operador{E}{\TFNR^n}{\TFNR}$ is $\OT$-increasing if 
 $$E(\Vector{A})\leqot E(\Vector{B})$$ whenever $\Vector{A}\leq_{\OT^n} \Vector{B}$. 
We can analogously define the $\OT$-decreasing function.
 \end{definicion}
 
Let a function $\operador{E}{\TFNR^n}{\TFNR}$. We call $E$ function type average (in short $\OT$-FTA) if it is $\OT$-increascing and, for each $A_1$, ..., $A_n\in\TFNR$,
$$
\Minn_{\leqot}\{A_1,...,A_n\} \leqot E(A_1,...,A_n) \leqot \Maxx_{\leqot} \{A_1,...,A_n\}.$$

 \begin{definicion}\label{idempotency}
 A function $\operador{E}{\TFNR^n}{\TFNR}$ is idempotent if $E(A,...,A)=A$ for every  $A$ in $\TFNR$.
 \end{definicion}
 
 \begin{proposicion}\label{typeaverange}
 Let $E$ be an $\operador{E}{\TFNR^n}{\TFNR}$. If $E$ is $\OT$-increasing then the following statements are equivalent:
 \begin{enumerate}
 \item[(i)] $E$ is idempontent;
 \item[(ii)] $E$ is {an OT}-FTA.
 \end{enumerate}
 \end{proposicion}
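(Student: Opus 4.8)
The plan is to prove the two implications separately, using only that $\leqot$ is a linear order on $\TFNR$ (Proposition \ref{OTordentotal}) together with the standing hypothesis that $E$ is $\OT$-increasing.

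First I would establish $(ii)\Rightarrow(i)$, which is immediate. Assume $E$ is an $\OT$-FTA and fix $A\in\TFNR$. Since $\Minn_{\leqot}\{A,\ldots,A\}=A=\Maxx_{\leqot}\{A,\ldots,A\}$, the defining double inequality of an $\OT$-FTA specializes to $A\leqot E(A,\ldots,A)\leqot A$, and antisymmetry of the order $\leqot$ forces $E(A,\ldots,A)=A$. Hence $E$ is idempotent in the sense of Definition \ref{idempotency}.

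For $(i)\Rightarrow(ii)$, assume $E$ is idempotent (and $\OT$-increasing). Fix $A_1,\ldots,A_n\in\TFNR$ and put $m:=\Minn_{\leqot}\{A_1,\ldots,A_n\}$ and $M:=\Maxx_{\leqot}\{A_1,\ldots,A_n\}$; these are well defined because $\leqot$ is a total order, so the binary $\Minn_{\leqot}$ and $\Maxx_{\leqot}$ extend unambiguously to a least and a greatest element of the finite family. By definition of $m$ and $M$ we have $m\leqot A_i\leqot M$ for every $i\in\{1,\ldots,n\}$, hence, by the definition of $\leq_{\OT^n}$,
\[ (m,\ldots,m)\leq_{\OT^n}(A_1,\ldots,A_n)\leq_{\OT^n}(M,\ldots,M). \]
Applying the $\OT$-increasing map $E$ and then using idempotency twice gives $m=E(m,\ldots,m)\leqot E(A_1,\ldots,A_n)\leqot E(M,\ldots,M)=M$, which is exactly the $\OT$-FTA condition. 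Combining both directions proves the equivalence.

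The argument is essentially mechanical, so I do not expect a genuine obstacle; the only point that deserves an explicit line is the observation just made, namely that the diagonal vectors $(m,\ldots,m)$ and $(M,\ldots,M)$ bound $(A_1,\ldots,A_n)$ from below and above in $\leq_{\OT^n}$ precisely because $\leqot$ is total, and that $\OT$-monotonicity of $E$ then transfers these bounds to the output.
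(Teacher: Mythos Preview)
Your proof is correct and follows essentially the same route as the paper: for $(i)\Rightarrow(ii)$ you sandwich $(A_1,\ldots,A_n)$ between the diagonal vectors at the minimum and maximum and invoke monotonicity plus idempotency, which is exactly what the paper does, and for $(ii)\Rightarrow(i)$ the paper simply writes ``Straightforward'' where you spell out the antisymmetry step.
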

 \begin{proof} $(i)\Rightarrow (ii)$ Take any $A_1$, ..., $A_n$ in $\TFNR$. We denote by $A_i=\Minn_{\leqot}\{A_k{;}k=1,...,n\}$ and $A_j=\Maxx_{\leqot }\{A_k{;}k=1,...,n\}$. By monotonicity 
$$A_i=E(A_i,...,A_i)\leqot E(A_1,...,A_n) \leqot E(A_j,...,A_j)=A_j.$$
Hence 
$$\Minn_{\leqot}\{A_k;k=1,...,n\}\leqot E(A_1,...,A_n) \leqot \Maxx_{\leqot }\{A_k;k=1,...,n\}.$$
$(ii)\Rightarrow (i)$ Straightforward.
 \end{proof}
 \begin{definicion}
Let $A_i$ on $\TFNR$ for all $i=1,...,n$. 
\begin{enumerate}
\item[(i)] Arithmetic mean
$$M_a(A_1,...,A_n)={\left( \suma{n}{i=1}A_i \right ) \div\crispy{n}}.$$

\item[(ii)] Weighted arithmetic mean
$$M_w(A_1,...,A_n)=\suma{n}{i=1}\crispy{a}_iA_i,$$ where $\crispy{a}_i\geqot\crispy{0}$ for all $i=1,...,n$ and $\suma{n}{i=1}\crispy{a}_i=\crispy{1}$.
\end{enumerate}
 \end{definicion}

  \begin{proposicion}
 The function $\operador{M_w}{\TFNR^n}{\TFNR}$ is {an} $\OT$-FTA.
 \end{proposicion}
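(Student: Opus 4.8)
The plan is to verify the two defining conditions of an $\OT$-FTA directly from the definition of $M_w$: first that $M_w$ is $\OT$-increasing, and then that it is sandwiched between the $\leqot$-minimum and the $\leqot$-maximum of its arguments. For monotonicity, suppose $\Vector{A}\leq_{\OT^n}\Vector{B}$, i.e. $A_i\leqot B_i$ for each $i$. Since each weight satisfies $\crispy{a}_i\geqot\crispy{0}$, the Proposition on scalar multiplication (the one just before the definition of $\Maxx_{\leqot}$) gives $\crispy{a}_iA_i\leqot\crispy{a}_iB_i$ for every $i$. Then I would apply Corollary \ref{sumacontinua} inductively (adding one term at a time) to conclude $\suma{n}{i=1}\crispy{a}_iA_i\leqot\suma{n}{i=1}\crispy{a}_iB_i$, which is exactly $M_w(\Vector{A})\leqot M_w(\Vector{B})$.

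For the averaging bounds, the cleanest route is to invoke Proposition \ref{typeaverange}: since $M_w$ is already shown to be $\OT$-increasing, it suffices to prove that $M_w$ is idempotent in the sense of Definition \ref{idempotency}. So I would compute $M_w(A,\dots,A)=\suma{n}{i=1}\crispy{a}_iA$. Using the distributive law for crisp scalars over a fixed $\TFN$ (Remark \ref{rem-ult-sec2}) this equals $\left(\suma{n}{i=1}\crispy{a}_i\right)A=\crispy{1}A=A$, where the middle equality uses the constraint $\suma{n}{i=1}\crispy{a}_i=\crispy{1}$ and the last uses $\crispy{1}A=A$ (noted in Remark \ref{observaciondeorden}). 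Hence $M_w$ is idempotent, and Proposition \ref{typeaverange} immediately yields that $M_w$ is an $\OT$-FTA.

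The only delicate point is the distributive step $\suma{n}{i=1}\crispy{a}_iA=\left(\suma{n}{i=1}\crispy{a}_i\right)A$: this is \emph{not} a general fact for fuzzy-number scalar multiplication, but it holds here because all the $\crispy{a}_i$ are nonnegative crisp fuzzy numbers, so no sign changes occur in the formula of Proposition \ref{pro-oper}(3) and the coordinatewise addition of Proposition \ref{pro-oper}(1) combines cleanly with the coordinatewise scaling. I would therefore make explicit that $\crispy{a}_i\geqot\crispy{0}$ is used precisely here (as well as in the monotonicity step, to get the right direction of the inequality after scaling). Everything else is a routine induction on $n$ using the already-established arithmetic–order compatibility results.
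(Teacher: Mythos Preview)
Your proposal is correct and follows essentially the same route as the paper: establish $\OT$-monotonicity, verify idempotency via the distributive law together with $\suma{n}{i=1}\crispy{a}_i=\crispy{1}$, and then invoke Proposition~\ref{typeaverange}. The only difference is one of thoroughness: the paper merely asserts ``by monotonicity'' and writes the sandwich inequality directly (computing $M_w(A_i,\dots,A_i)=A_i\suma{n}{i=1}\crispy{a}_i=A_i$ at the minimum and maximum), whereas you actually justify monotonicity from the scalar-compatibility proposition and Corollary~\ref{sumacontinua}, and you make explicit why the distributive step is legitimate when all weights are nonnegative crisps.
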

 \begin{proof} Let $A_i=\Minn_{\leqot}\{A_k{;}k=1,...,n\}$ and $A_j=\Maxx_{\leqot }\{A_k{;}k=1,...,n\}$ be two $\TFNR$. By monotonicity 
$$A_i=A_i\suma{n}{i=1}\crispy{a}_i\leqot M_w(A_1,...,A_n) \leqot M_w(A_j,...,A_j)=A_j\suma{n}{i=1}\crispy{a}_i=A_j.$$
Hence 
$$\Minn_{\leqot}\{A_k;k=1,...,n\}\leqot M_w(A_1,...,A_n) \leqot \Maxx_{\leqot }\{A_k;k=1,...,n\}.$$
Therefore, by Proposition \ref{typeaverange} we get $M_w$ is {an} $\OT$-FTA.
 \end{proof} 
\begin{corolario}
 The function $\operador{M_a}{\TFNR^n}{\TFNR}$ is {an $\OT$}-FTA.
\end{corolario}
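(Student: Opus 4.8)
The plan is to deduce the corollary directly from the preceding proposition about the weighted arithmetic mean $M_w$. The key observation is that the arithmetic mean $M_a$ is the special case of the weighted arithmetic mean in which all weights are equal, namely $\crispy{a}_i=\crispy{1}\div\crispy{n}$ for each $i=1,\ldots,n$. First I would verify that this choice of weights is admissible: since $\crispy{n}$ corresponds to the positive real number $n\geq 1$ and the crisp fuzzy numbers form a field isomorphic to $\con{R}$ (Remark \ref{rem-ult-sec2}), we have $\crispy{1}\div\crispy{n}=\widetilde{1/n}\geqot\crispy{0}$, and $\suma{n}{i=1}(\crispy{1}\div\crispy{n})=\crispy{n}\div\crispy{n}=\crispy{1}$. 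Hence the hypotheses on the weights in the definition of $M_w$ are met.

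Next I would check that, with these weights, $M_w$ and $M_a$ agree as functions on $\TFNR^n$. Using distributivity of the scalar product over addition and associativity of scalar multiplication (Proposition \ref{pro-oper} and Remark \ref{rem-ult-sec2}), for any $A_1,\ldots,A_n\in\TFNR$ we get
$$M_w(A_1,\ldots,A_n)=\suma{n}{i=1}(\crispy{1}\div\crispy{n})A_i=\left(\suma{n}{i=1}A_i\right)\div\crispy{n}=M_a(A_1,\ldots,A_n).$$
Since $M_w$ is an $\OT$-FTA by the preceding proposition, and being an $\OT$-FTA is a property of the function itself ($\OT$-increasingness together with the bracketing between $\Minn_{\leqot}$ and $\Maxx_{\leqot}$), the equality $M_a=M_w$ immediately yields that $M_a$ is an $\OT$-FTA.

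The only mild subtlety — and the step I would be most careful about — is making sure the manipulation factoring $\crispy{1}\div\crispy{n}$ out of the sum is legitimate in $\TFNR$; this relies precisely on the fact that the scalar being factored is a \emph{crisp} fuzzy number, so that the distributive law from Proposition \ref{pro-oper}(3) and Remark \ref{rem-ult-sec2} applies (recall from the Remark following Proposition \ref{pro-oper} that general products of $\TFN$'s behave badly). Once that is in place, the argument is essentially a one-line reduction. Alternatively, one could mirror the proof of the previous proposition verbatim, using idempotency of $M_a$ (which follows from $\suma{n}{i=1}(\crispy{1}\div\crispy{n})=\crispy{1}$, giving $M_a(A,\ldots,A)=A\suma{n}{i=1}(\crispy{1}\div\crispy{n})=A$) together with $\OT$-monotonicity and Proposition \ref{typeaverange}, but the reduction to $M_w$ is cleaner.
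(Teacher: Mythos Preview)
Your proposal is correct and follows exactly the approach intended by the paper: the corollary is stated without proof immediately after the proposition on $M_w$, so the implicit argument is precisely the reduction you give, namely taking $\crispy{a}_i=\crispy{1}\div\crispy{n}$ and observing that $M_a$ is the particular case of $M_w$ with these weights.
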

 
%


\end{document}